\newtheorem{thm}{Theorem}[section]
\newtheorem{prop}[thm]{Proposition}
\newtheorem{cor}[thm]{Corollary}
\newtheorem{lem}[thm]{Lemma}
\newtheorem{conj}[thm]{Conjecture}
\newtheorem{exa}[thm]{Example}
\newtheorem{question}[thm]{Question}
\newcommand{\shu}{\shuffle}
\DeclareMathOperator{\cdes}{cdes}
\DeclareMathOperator{\exc}{exc}
\DeclareMathOperator{\Pk}{Pk}
\DeclareMathOperator{\pk}{pk}
\DeclareMathOperator{\cpk}{cpk}
\DeclareMathOperator{\Tr}{Tr}
\newcommand{\ben}{\begin{enumerate}}
\newcommand{\een}{\end{enumerate}}
\newcommand{\ble}{\begin{lem}}
\newcommand{\ele}{\end{lem}}
\newcommand{\bth}{\begin{thm}}
\renewcommand{\eth}{\end{thm}}
\newcommand{\bpr}{\begin{prop}}
\newcommand{\epr}{\end{prop}}
\newcommand{\bco}{\begin{cor}}
\newcommand{\eco}{\end{cor}}
\newcommand{\bcon}{\begin{conj}}
\newcommand{\econ}{\end{conj}}
\newcommand{\bde}{\begin{defn}}
\newcommand{\ede}{\end{defn}}
\newcommand{\bex}{\begin{exa}}
\newcommand{\eex}{\end{exa}}
\newcommand{\barr}{\begin{array}}
\newcommand{\earr}{\end{array}}
\newcommand{\btab}{\begin{tabular}}
\newcommand{\etab}{\end{tabular}}
\newcommand{\beq}{\begin{equation}}
\newcommand{\eeq}{\end{equation}}
\newcommand{\bea}{\begin{eqnarray*}}
\newcommand{\eea}{\end{eqnarray*}}
\newcommand{\bal}{\begin{align*}}
\newcommand{\bce}{\begin{center}}
\newcommand{\ece}{\end{center}}
\newcommand{\bpi}{\begin{picture}}
\newcommand{\epi}{\end{picture}}
\newcommand{\bpp}{\begin{picture}}
\newcommand{\epp}{\end{picture}}
\newcommand{\bfi}{\begin{figure} \begin{center}}
\newcommand{\efi}{\end{center} \end{figure}}
\newcommand{\bprf}{\begin{proof}}
\newcommand{\eprf}{\end{proof}\medskip}
\newcommand{\bsl}{\begin{slide}{}}
\newcommand{\esl}{\end{slide}}
\newcommand{\bfr}{\begin{frame}}
\newcommand{\efr}{\end{frame}}
\newcommand{\hqed}{\hfill \qed}
\newcommand{\eqed}[1]{$\textcolor{white}{\qed}\hfill{\dil#1}\hfill\qed$}
\newcommand{\ol}{\overline}
\newcommand{\hs}[1]{\hspace{#1}}
\newcommand{\hso}[1]{\hspace{-1pt}}
\newcommand{\vs}[1]{\vspace{#1}}
\newcommand{\qmq}[1]{\quad\mbox{#1}\quad}
\newcommand{\emp}{\emptyset}
\newcommand{\sbs}{\subset}
\newcommand{\sbe}{\subseteq}
\newcommand{\iso}{\cong}
\newcommand{\case}[4]{\left\{\barr{ll}#1&\mbox{#2}\\#3&\mbox{#4}\earr\right.}
\newcommand{\fl}[1]{\lfloor #1 \rfloor}
\def\<{\langle}
\def\>{\rangle}
\newcommand{\spn}[1]{\langle{#1}\rangle}
\newcommand{\ra}{\rightarrow}
\newcommand{\de}{\delta}
\newcommand{\ep}{\epsilon}
\newcommand{\io}{\iota}
\newcommand{\ka}{\kappa}
\newcommand{\si}{\sigma}
\renewcommand{\th}{\theta}
\newcommand{\bbN}{{\mathbb N}}
\newcommand{\fS}{{\mathfrak S}}
\DeclareMathOperator{\Av}{Av}
\DeclareMathOperator{\des}{des}
\DeclareMathOperator{\Des}{Des}
\DeclareMathOperator{\inv}{inv}
\DeclareMathOperator{\maj}{maj}
\DeclareMathOperator{\st}{st}
\newcommand{\dil}{\displaystyle}
\begin{document}
\pagestyle{plain}

\title{Cyclic Pattern Containment and Avoidance
}
\author{Rachel Domagalski\\[-5pt]
\small Department of Mathematics, Michigan State University,\\[-5pt]
\small East Lansing, MI 48824-1027, USA, {\tt domagal9@msu.edu}\\
Jinting Liang\\[-5pt]
\small Department of Mathematics, Michigan State University,\\[-5pt]
\small East Lansing, MI 48824-1027, USA, {\tt liangj26@msu.edu}\\
Quinn Minnich\\[-5pt]
\small Department of Mathematics, Michigan State University,\\[-5pt]
\small East Lansing, MI 48824-1027, USA, {\tt minnichq@msu.edu}\\
Bruce E. Sagan\\[-5pt]
\small Department of Mathematics, Michigan State University,\\[-5pt]
\small East Lansing, MI 48824-1027, USA, {\tt bsagan@msu.edu}\\
Jamie Schmidt\\[-5pt]
\small Department of Mathematics, Michigan State University,\\[-5pt]
\small East Lansing, MI 48824-1027, USA, {\tt schmi710@msu.edu}\\
Alexander Sietsema\\[-5pt]
\small Department of Mathematics, Michigan State University,\\[-5pt]
\small East Lansing, MI 48824-1027, USA, {\tt sietsem6@msu.edu}
}

\date{\today\\[10pt]
	\begin{flushleft}
	\small Key Words: cyclic descent, cyclic permutation, Erd\H{o}s-Szekeres Theorem, pattern avoidance, pattern containment
	                                       \\[5pt]
	\small AMS subject classification (2010):  05A05  (Primary) 05A15, 05A19  (Secondary)
	\end{flushleft}}

\maketitle

\begin{abstract}
 The study of pattern containment and avoidance for linear permutations is a well-estab\-lished area of enumerative combinatorics.  A cyclic permutation is the set of all rotations of a linear permutation.  Callan initiated the study of permutation avoidance in cyclic permutations and characterized the avoidance classes for all single permutations of length $4$.  We continue this work.  In particular, we establish a cyclic variant of the Erd\H{o}s-Szekeres Theorem that any linear permutation of length $mn+1$ must contain either the increasing pattern of length $m+1$ or the decreasing pattern of length $n+1$.  We then derive results about avoidance of multiple patterns of length $4$. We also determine generating functions for the cyclic descent statistic on these classes.  Finally, we end with various open questions and avenues for future research.
\end{abstract}

\section{Introduction}

We first review some notions from the well-studied theory of patterns in (linear) permutations.  More information on this topic can be found in the texts of B\'ona~\cite{bon:cp}, Sagan~\cite{sag:aoc}, or Stanley~\cite{sta:ec1,sta:ec2}.
Let $\bbN$  be the nonnegative  integers.  If $m,n\in\bbN$ then we define $[m,n]=\{m,m+1,\ldots,n\}$ which we abbreviate to $[n]=[1,n]$ when $m=1$.  Consider the symmetric group $\fS_n$ of all permutations $\pi=\pi_1\pi_2\ldots\pi_n$ of $[n]$ written in one-line notation.  
We call $n$ the {\em length} of $\pi$ and write $|\pi|=n$.  We will sometimes put commas between the elements of $\pi$ for readability.
We say that two sequences of distinct integers $\pi=\pi_1\ldots\pi_k$ and $\si=\si_1\ldots\si_k$ are {\em order isomorphic}, written $\pi\iso\si$, whenever $\pi_i<\pi_j$ if and only if $\si_i<\si_j$.   If $\si\in\fS_n$ and $\pi\in\fS_k$ then {\em $\si$ contains $\pi$ as a pattern} if there is a subsequence $\si'$ of $\si$ with $|\si'|=k$ and $\si'\iso\pi$.  If no such subsequence exists then {\em $\si$ avoids $\pi$}.  We use the notation
$$
\Av_n(\pi) =\{\si\in\fS_n \mid \text{$\si$ avoids $\pi$}\}
$$
for the {\em avoidance class} of $\pi$.
For example $\si=42351$ contains the pattern $\pi=3241$ because of the subsequence $4251$ among others.  But it avoids $1234$ because it has no increasing subsequence of length $4$.  One can extend this notion to sets of permutations $\Pi$ by letting
$$
\Av_n(\Pi) = \{\si\in\fS_n \mid \text{$\si$ avoids all $\pi\in\Pi$}\}
=\bigcap_{\pi\in\Pi} \Av_n(\pi).
$$

A famous theorem of Erd\H{o}s and Szekeres~\cite{ES:ctg} can be stated in terms of pattern containment and avoidance.  Let
$$
\io_n=12\ldots n
$$
and
$$
\de_n = n\ldots 21
$$
be the increasing and decreasing permutations of length $n$, respectively.
\begin{thm}[\cite{ES:ctg}]
\label{ESThm}
Suppose $m,n\in\bbN$.  Then any $\si\in\fS_{mn+1}$ contains either $\io_{m+1}$ or $\de_{n+1}$.  This is the best possible in that there exist permutations in $\fS_{mn}$ which avoid both $\io_{m+1}$ and $\de_{n+1}$.\hqed
\end{thm}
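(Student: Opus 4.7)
The plan is to prove both halves of Theorem~\ref{ESThm} separately. For the forced-containment half, I would suppose $\si \in \fS_{mn+1}$ and assign to each position $i \in [mn+1]$ a pair of statistics $(a_i, b_i)$, where $a_i$ is the length of the longest increasing subsequence of $\si$ ending at $\si_i$ and $b_i$ is the length of the longest decreasing subsequence ending at $\si_i$. If $\si$ were to avoid both $\io_{m+1}$ and $\de_{n+1}$, then $a_i \in [m]$ and $b_i \in [n]$ for every $i$, so the pairs $(a_i,b_i)$ would take at most $mn$ distinct values. Since there are $mn+1$ positions, the pigeonhole principle would produce indices $i<j$ with $(a_i,b_i)=(a_j,b_j)$. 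The key observation ruling this out is that for any $i<j$, either $\si_i<\si_j$---in which case appending $\si_j$ to a longest increasing subsequence of $\si$ ending at $\si_i$ yields $a_j \geq a_i+1$---or $\si_i>\si_j$, in which case analogously $b_j \geq b_i+1$. Both cases contradict the equality of the pairs.

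For the sharpness half, I would exhibit a single permutation in $\fS_{mn}$ that avoids both patterns. A clean choice is the concatenation of $m$ decreasing runs of length $n$ arranged in increasing block order, namely
$$
\si = n,n-1,\ldots,1,\; 2n,2n-1,\ldots,n+1,\; \ldots,\; mn, mn-1,\ldots,(m-1)n+1.
$$
The longest decreasing subsequence of $\si$ is a single complete run and has length exactly $n$, while the longest increasing subsequence is obtained by picking one element from each run and has length exactly $m$. Hence $\si$ contains neither $\io_{m+1}$ nor $\de_{n+1}$, establishing the best-possible claim.

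I do not foresee a major obstacle, as this is a classical argument. The step requiring the most care is the extension claim in the pigeonhole proof: one must verify that a longest increasing subsequence of $\si$ ending at $\si_i$ can in fact be extended by $\si_j$ whenever $i<j$ and $\si_i<\si_j$, which follows because every element of that subsequence occurs at a position at most $i$ and has value at most $\si_i$, hence strictly less than $\si_j$. The analogous observation for decreasing subsequences is symmetric, so the two cases fit together cleanly.
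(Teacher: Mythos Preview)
Your argument is the classical Seidenberg pigeonhole proof and is entirely correct, including the extremal construction for sharpness. Note that the paper does not supply its own proof of this theorem: it is quoted from~\cite{ES:ctg} and marked with a \texttt{\textbackslash hqed}, so there is no in-paper argument to compare against.
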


\begin{figure}
    \centering
    \begin{tikzpicture}
    \draw (1,1) grid (5,5);
    \fill(1,4) circle(.1);
    \fill(2,2) circle(.1);
    \fill(3,3) circle(.1);
    \fill(4,5) circle(.1);
    \fill(5,1) circle(.1);
    \draw(1,0.5) node{$1$};
    \draw(2,0.5) node{$2$};
    \draw(3,0.5) node{$3$};
    \draw(4,0.5) node{$4$};
    \draw(5,0.5) node{$5$};
    \draw(0.5,1) node{$1$};
    \draw(0.5,2) node{$2$};
    \draw(0.5,3) node{$3$};
    \draw(0.5,4) node{$4$};
    \draw(0.5,5) node{$5$};
    \end{tikzpicture}
\hs{60pt}
     \begin{tikzpicture}
     \draw[thick,dotted, ->] (0,1)--(0,3);
     \draw[thick,dotted, ->] (6,1)--(6,3);
     \draw[thick,dotted] (0,3)--(0,5) (6,3)--(6,5) (0,1)--(6,1) (0,5)--(6,5);
    \draw (1,1) grid (5,5);
    \fill(1,4) circle(.1);
    \fill(2,2) circle(.1);
    \fill(3,3) circle(.1);
    \fill(4,5) circle(.1);
    \fill(5,1) circle(.1);
    \draw(1,0.5) node{$1$};
    \draw(2,0.5) node{$2$};
    \draw(3,0.5) node{$3$};
    \draw(4,0.5) node{$4$};
    \draw(5,0.5) node{$5$};
    \draw(-0.5,1) node{$1$};
    \draw(-0.5,2) node{$2$};
    \draw(-0.5,3) node{$3$};
    \draw(-0.5,4) node{$4$};
    \draw(-0.5,5) node{$5$};
    \end{tikzpicture}
    \caption{The graph of $42351$ on the left and of $[42351]$ on the right}
    \label{42351}
\end{figure}
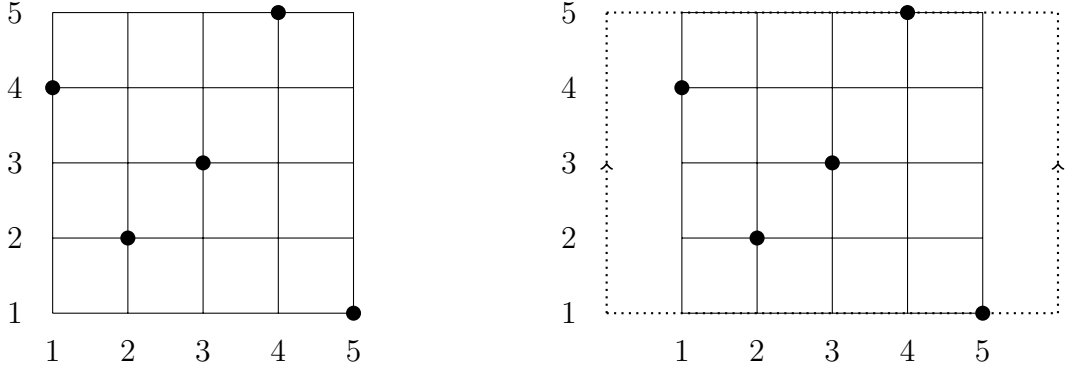

The {\em diagram} of  $\pi\in\fS_n$ is the collection of points $(i,\pi_i)$ in the first quadrant of the Cartesian plane.  
The graphical representation of $\pi=42351$ is given on the left in Figure~\ref{42351}.  It follows that we can act on $\pi$ with the dihedral group of the square 
$$
D_4 =\{\rho_0,\rho_{90},\rho_{180},\rho_{270},
r_0,r_1,r_{-1},r_\infty\}
$$
where $\rho_\th$ is rotation counterclockwise through $\th$ degrees and $r_m$ is reflection in a line of slope $m$.  We wish to write some of these rigid motions in terms of the one-line notation for $\pi=\pi_1\pi_2\ldots\pi_n$.  Reflection in a vertical line gives the {\em reversal} of $\pi$ which is
$$
\pi^r=\pi_n\ldots\pi_2\pi_1.
$$
Similarly, reflection in a horizontal line results in the {\em complement} of $\pi$
$$
\pi^c = n+1-\pi_1,\hs{7pt} n+1-\pi_2,\hs{7pt} \ldots,\hs{7pt} n+1-\pi_n.
$$
Combining these two operations gives rotation by $180$ degree or {\em reverse complement}
$$
\pi^{rc}= n+1-\pi_n,\hs{7pt}  \ldots,\hs{7pt}  n+1-\pi_2,\hs{7pt}  n+1-\pi_1.
$$
We apply any of these operations to sets of permutations by applying them to each element of the set.

We can use diagrams to inflate permutations.  
If we are given $\pi=\pi_1\pi_2\ldots\pi_n\in\fS_n$ and
permutations $\si_1,\si_2,\ldots,\si_n$ then the {\em inflation of $\pi$ by the $\si_i$} is the permutation
$\pi\spn{\si_1,\si_2,\ldots,\si_n}$ whose diagram is obtained from that of $\pi$ by replacing each vertex $(i,\pi_i)$ by a copy of $\si_i$.
For example, given
$\pi=132$  and $\si_1,\si_2,\si_3$ then a schematic of the diagram of $132\spn{\si_1,\si_2,\si_3}$ is given on the right in Figure~\ref{132}.  More concretely, if $\si_1=21$, $\si_2=1$, and $\si_3=213$ then
$$
132\spn{\si_1,\si_2,\si_3}=216435.
$$

We say that patterns $\pi$ and $\pi'$ are {\em Wilf equivalent}, written $\pi\equiv\pi'$, if
$\#\Av_n(\pi)=\#\Av_n(\pi')$ for all $n\in\bbN$ where the hash symbol denotes cardinality.  This definition extends in the obvious way to sets of patterns.
Note that if $\pi$ and $\pi'$ are Wilf equivalent then both must be in the same $\fS_n$.  It is easy to see that if $\phi\in D_n$ then 
$\pi\equiv\phi(\pi)$ and so these are called {\em trivial Wilf equivalences}.  As is well known, all elements of $\fS_3$ are Wilf equivalent.
\bth 
If $\pi\in\fS_3$ then
$$
\#\Av_n(\pi) = C_n
$$
where $C_n=\frac{1}{n+1}\binom{2n}{n}$ it the $n$th Catalan number.\hqed
\eth
\noindent Trivial Wilf equivalence carries over to sets $\Pi$ of permutations.  Simion and Schmidt~\cite{SS:rp} determined all Wilf equivalences among the $\Av_n(\Pi)$ for all $\Pi\sbe\fS_3$.

A {\em permutation statistic} is a map $\st:\uplus_{n\ge0} \fS_n\ra S$ where $S$ is some set.  Many statistics are based on the {\em descent set} statistic which is
$$
\Des\pi=\{i \mid \pi_i>\pi_{i+1}\}.
$$
The elements $i\in\Des\pi$ are called {\em descents} and if $\pi_i<\pi_{i+1}$ then $i$ is called an {\em ascent}.
Four famous statistics related to $\Des$ are the
the {\em descent number} statistic
$$
\des\pi=\#\Des\pi
$$
the {\em major index} statistic
$$
\maj\pi=\sum_{i\in\Des\pi} i,
$$
the {\em inversion} statistic
$$
\inv\pi=\#\{(i,j) \mid \text{$i<j$ and $\pi_i>\pi_j$}\},
$$
and the {\em excedance statistic}
$$
\exc\pi=\#\{i \mid \pi(i)>i\}.
$$
Let $\st$ be a statistic whose range is $\bbN$ and $q$ be a variable. If $\Pi$ is a set of patterns then its avoidance class has a corresponding generating function
$$
F^{\st}_n(\Pi)=F^{\st}_n(\Pi;q)=\sum_{\si\in\Av_n(\Pi)} q^{\st\si}.
$$
Say that $\Pi$ and $\Pi'$ are {\em $\st$-Wilf equivalent} and write
$\Pi\stackrel{\st}{\equiv}\Pi'$ if 
$F^{\st}_n(\Pi)=F^{\st}_n(\Pi')$ for all $n\ge0$.  Clearly $\st$-Wilf equivalence implies Wilf equivalence.  The $\maj$- and $\inv$-Wilf equivalence classes for $\Pi\sbe\fS_3$ were determined by 
Dokos, Dwyer, Johnson, Sagan,  and Selsor~\cite{DDJSS:pps}.

\bfi
\begin{tikzpicture}
\draw (0,0) grid (2,2);
\fill (0,0) circle(.1);
\fill (1,2) circle(.1);
\fill (2,1) circle(.1);
\end{tikzpicture}
\hspace{50pt}
\begin{tikzpicture}
\draw (0,0) -- (2,0) -- (2,2) -- (0,2) -- (0,0);
\draw (2/3,0) -- (2/3,2/3) -- (0,2/3);
\draw (2,2/3) -- (4/3, 2/3) -- (4/3,2);;
\draw (2,4/3) -- (2/3,4/3) -- (2/3,2);
\node at (1/3,1/3){$\si_1$};
\node at (1,5/3){$\si_2$};
\node at (5/3,1){$\si_3$};
\end{tikzpicture}
\caption{The diagram of $132$ (left) and $132\spn{\si_1,\si_2,\si_3}$ (right)}
\label{132}
\efi

If $\pi=\pi_1\pi_2\ldots\pi_n\in\fS_n$ then the corresponding {\em cyclic permutation} is the set of all rotations of $\pi$, denoted
$$
[\pi]=\{\pi_1\pi_2\ldots\pi_n,\hs{7pt} \pi_2\ldots\pi_n\pi_1,\hs{7pt} 
\ldots,\hs{7pt} \pi_n\pi_1\ldots,\pi_{n-1}\}.
$$
Continuing our example from the beginning of the section,
$$
[42351] = \{42351,\hs{7pt} 23514,\hs{7pt} 35142,\hs{7pt} 51423,\hs{7pt} 14235\}.
$$
If necessary, we will call permutations from $\fS_n$ {\em linear} to distinguish them from their cyclic cousins.  We also use square brackets to denote cyclic analogues of objects defined in the linear case.  For example, $[\fS_n]$ is the set of all cyclic permutations of length $n$.  We say a cyclic permutation $[\si]$ {\em contains $[\pi]$ as a pattern} if there is some rotation $\si'$ of $\si$ which contains $\pi$ linearly.  Otherwise $[\si]$ {\em avoids $[\pi]$}.
In our perennial example, even though $42351$ avoids $1234$ we have that $[42351]$ contains $[1234]$ since the rotation $14235$ has the copy $1235$ of this pattern.  Given a set $[\Pi]$ of cyclic patterns the cyclic avoidance class $\Av_n[\Pi]$ is defined as expected.
Note that when using a specific set of cyclic permutations the square brackets will be put around the permutations themselves, for example,
$\Av_n([\pi],[\pi'])$.
Callan~\cite{cal:pac} determined $\#\Av_n[\pi]$ for all $[\pi]\in[\fS_4]$.  Gray, Lanning, and Wang continued work in this direction considering cyclic packing of patterns~\cite{GLW:pcc1} and patterns in colored cyclic permutations~\cite{GLW:pcc2}.

The graph of a cyclic permutation $[\pi]$ is obtained by embedding the graph of $\pi$ on a cylinder.   This is indicated on the right in Figure~\ref{42351} by identifying the two dotted arrows.  Cyclic Wilf equivalence has the obvious definition.  But note that now there are fewer trivial cyclic Wilf equivalences since we need the chosen group element to preserved the cylinder, not just the square.  So the only trivial equivalences are
\begin{equation}
\label{cW=}
[\pi]\equiv [\pi^r]\equiv[\pi^c]\equiv[\pi^{rc}].    
\end{equation}

Certain linear permutation statistics have obvious cyclic analogues.  For example, 
if $\pi\in\fS_n$ then its {\em cyclic descent number} is
$$
\cdes[\pi] =\#\{i \mid \text{$\pi_i>\pi_{i+1}$ where subscripts are taken modulo $n$}\}.
$$
Note that this is well defined because the cardinality does not depend on which representative of $[\pi]$ is chosen.  To illustrate, $\pi=23514$ has cyclic descents at indices $3$ and $5$ so $\cdes[\pi]=2$.
The corresponding generating function
$F^{\cdes}_n[\Pi]$ where $[\Pi]$ is a set of cyclic permutations, and $\cdes$-Wilf equivalence should now need no definition.  
Note that $\cdes$ is another form of the excedance statistic on linear permutations.  In particular, if $\pi=\pi_1\pi_2\ldots\pi_n$ then
$$
\cdes[\pi]=\exc (\pi_n,\ldots,\pi_2,\pi_1)
$$
where $(\pi_n,\pi_{n-1}\ldots,\pi_1)$ is cycle notation for the linear permutation which, as a function, sends $\pi_i$ to $\pi_{i-1}$ for all $i$ modulo $n$.

The rest of this paper is organized as follows.  In the next section we prove a cyclic variant of the Erd\H{o}s-Szekeres Theorem.  This will be used in the sequel to show that, for certain $[\Pi]$, the set $\Av_n[\Pi]$ becomes empty for large enough $n$.  Section~\ref{pad}  will extend Callan's work by enumerating $\Av_n[\Pi]$ for $[\Pi]\sbs[\fS_4]$ consisting of two patterns.   One of our principle proof techniques will be the use of generating trees.  The following section will consider $[\Pi]$ with three or more patterns.  In Section~\ref{cdg} we will compute the cyclic descent generating functions for $[\Pi]\sbs[\fS_4]$, thus refining the previous enumerations.  We will end with a section of open problems and additional comments.

\section{A cyclic Erd\H{o}s-Szekeres Theorem}
\label{cEST}

In this section we will use the linear Erd\H{o}s-Szekeres Theorem to prove a cyclic analogue.  We will need a variant of the decreasing permutation $\de_n$ defined as follows.  Given nonnegative integers $n$ (the length), $d$ (the difference), and $s$ (the smallest value)  define the decreasing sequence
$$
\de_{n,d,s} = s+(n-1)d,\hs{7pt} s+(n-2)d,\hs{7pt} \ldots,\hs{7pt}
s+d,\hs{7pt} s.
$$
For example
$$
\de_{5,2,3} = 11, 9, 7, 5, 3.
$$

\begin{figure}
    \centering
     \begin{tikzpicture}[scale=.6]
     \draw[thick,dotted, ->] (0,1)--(0,8.5);
     \draw[thick,dotted, ->] (17,1)--(17,8.5);
     \draw[thick,dotted] (0,8.5)--(0,16) (17,8.5)--(17,16) (0,1)--(17,1) (0,16)--(17,16);
    \draw (1,1) grid (16,16);
    \fill(1,1) circle(.1);
    \fill(2,12) circle(.1);
    \fill(3,7) circle(.1);
    \fill(4,2) circle(.1);
    \fill(5,13) circle(.1);
    \fill(6,8) circle(.1);
    \fill(7,3) circle(.1); 
    \fill(8,14) circle(.1);
    \fill(9,9) circle(.1);
    \fill(10,4) circle(.1);    
    \fill(11,15) circle(.1);
    \fill(12,10) circle(.1);
    \fill(13,5) circle(.1);
    \fill(14,16) circle(.1);
    \fill(15,11) circle(.1);
    \fill(16,6) circle(.1);
    \foreach \x in {1,2,...,16}
          {\draw(\x,0.5) node{$\x$};
            \draw(-.5,\x) node{$\x$};
          }
    \end{tikzpicture}
    \caption{The graph of $[\si]$ when $m=5$ and $n=3$}
    \label{ESFig}
\end{figure}
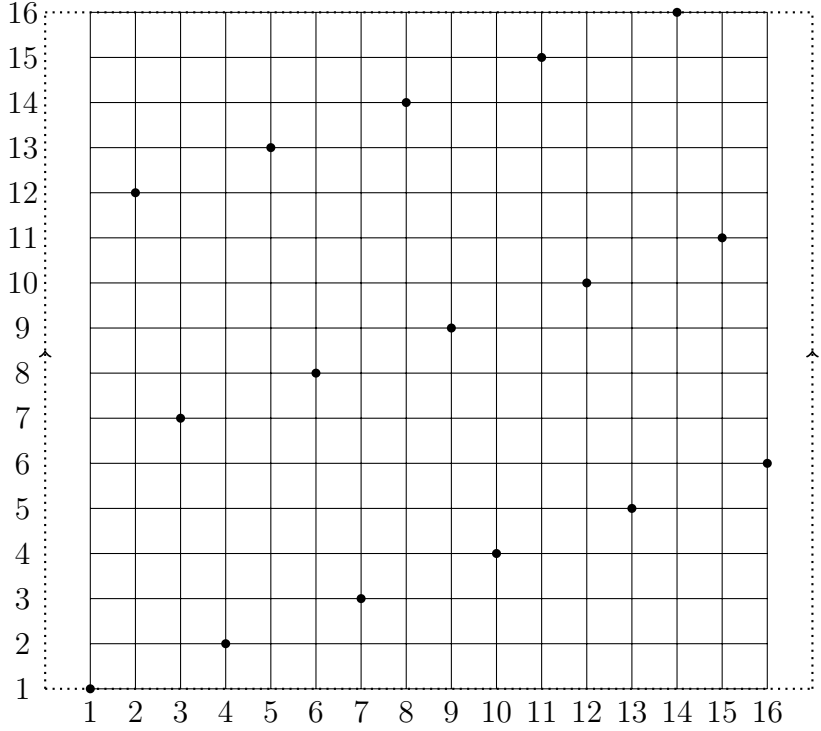

\begin{thm}
\label{EScyc}
Suppose $m,n\in\bbN$.  Then any $[\si]\in[\fS_{mn+2}]$ contains either $[\io_{m+2}]$ or $[\de_{n+2}]$.  This is the best possible in that there exist permutations in $[\fS_{mn+1}]$ which avoid both $[\io_{m+2}]$ and $[\de_{n+2}]$.
\end{thm}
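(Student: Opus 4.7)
The plan is to split the argument into two parts. For the containment statement, given any $[\sigma]\in[\fS_{mn+2}]$ with $N=mn+2$, I would pass to the rotation $\sigma'=N\tau$ in which the maximum sits at position $1$; here $\tau$ is a linear arrangement of $[mn+1]$. Applying the linear Erd\H{o}s--Szekeres theorem (Theorem~\ref{ESThm}) to $\tau$ yields either an $\io_{m+1}$-copy or a $\de_{n+1}$-copy. In the first case, the rotation $\tau N$ of $\sigma$ exhibits $\io_{m+2}$ by appending $N$ after the $\io_{m+1}$-copy, so $[\sigma]$ contains $[\io_{m+2}]$. In the second case, $\sigma'=N\tau$ itself exhibits $\de_{n+2}$ by prepending $N$ before the $\de_{n+1}$-copy, so $[\sigma]$ contains $[\de_{n+2}]$.

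For sharpness, generalizing the example in Figure~\ref{ESFig}, I would take
$$
\sigma \;=\; 1,\ \de_{n,m,2},\ \de_{n,m,3},\ \ldots,\ \de_{n,m,m+1}
$$
as a concatenation; the $m$ blocks $\de_{n,m,s}$ for $s=2,\ldots,m+1$ together use each value in $\{2,\ldots,mn+1\}$ exactly once, so $\sigma\in\fS_{mn+1}$. To rule out $[\io_{m+2}]$, I would fix an arbitrary rotation of $\sigma$ and observe that its cut lies inside at most one block, which I will call the \emph{cut block}. Each of the other $m-1$ blocks, being internally decreasing, contributes at most one cell to any increasing subsequence of the rotation. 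In the cut block, the rotation-order value sequence consists of a decreasing tail followed by a decreasing head with every head value exceeding every tail value, so it contributes at most two. Finally the element $1$, being minimal, must open any subsequence containing it and thus never pushes the total past $m+1$.

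To rule out $[\de_{n+2}]$, I would track the \emph{internal position} $k\in\{1,\ldots,n\}$ of each chosen cell within its block. The step-$m$ structure forces any decreasing subsequence that crosses from one block to the next in rotation order to strictly increase the internal position, so within each block the selected positions form a contiguous interval and consecutive blocks contribute disjoint intervals. The transition across the value-$1$ gap yields only a weak inequality, permitting at most one shared internal position; this caps the total contribution from array cells at $n+1$. A subsequence containing $1$ must place $1$ last and hence lies entirely before $1$ in the rotation, again giving $n+1$. The principal technical obstacle is precisely this $[\de_{n+2}]$-avoidance: the value-$1$ gap weakens the block-transition inequality from strict to non-strict, and one must separately analyze subsequences that include $1$ (which are forced to lie on one side of $1$) and subsequences that wrap around without $1$ (where the weak transition permits a single shared internal position).
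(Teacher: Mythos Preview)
Your proof is correct and follows essentially the same strategy as the paper: reduce the containment statement to the linear Erd\H{o}s--Szekeres theorem by isolating the maximum element, and establish sharpness via the identical block construction $\sigma = 1,\de_{n,m,2},\ldots,\de_{n,m,m+1}$, bounding increasing subsequences by a per-block contribution count and decreasing ones by tracking internal positions (which is exactly the paper's observation that $\sigma'$ decomposes into $n$ increasing ``rows''). One minor quibble: the selected internal positions within a block need not form a \emph{contiguous} interval, only an increasing sequence---but this is harmless, since what your argument actually uses is that the position sets from successive blocks are disjoint except for the single possible overlap across the value-$1$ gap, which correctly yields the bound $n+1$.
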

\begin{proof}
To prove the first statement we can assume, by rotating $\pi$ if necessary, that 
$$
\si=\si_1,\si_2,\ldots,\si_{mn+1},mn+2.
$$
So $\si'=\si_1\si_2\ldots\si_{mn+1}\in\fS_{mn+1}$ and, by Theorem~\ref{ESThm}, contains a copy $\ka$ of either $\io_{m+1}$ or $\de_{n+1}$.
In the first case, the concatenation $\ka,mn+1$ is a copy of $[\io_{m+2}]$ in $[\pi]$.  In the second case, we have that 
$mn+1,\ka$ is a copy of $[\de_{n+2}]$ in $[\si]$.

To prove the second statement, consider the concatenation
$$
\si = 1,\hs{3pt} \de_{n,m,2},\hs{3pt} \de_{n,m,3},\hs{3pt} 
\ldots,\hs{3pt} \de_{n,m,m+1}.
$$
For example, when $m=5$ and $n=3$ then
$$
[\si] =[1, 12, 7, 2, 13, 8, 3, 14, 9, 4, 15, 10, 5, 16, 11, 6] 
$$
whose graph is shown in Figure~\ref{ESFig}.  Define $\si'$ by 
$\si=1\si'$ and note that $\si'$ can be written either as a disjoint union of $m$ decreasing subsequences of length $n$, or of $n$ increasing subsequences of length $m$.  In a linear permutation, any increasing subsequence can intersect any decreasing subsequence at most once.
So any increasing subsequence of $\si'$ has length at most $m$, and any decreasing subsequence has length at most $n$.  Now let $[\pi]$ be a subsequence of $[\si]$.  We consider two cases.

Suppose first that $[\pi]$ contains $1$.  If $[\pi]$ is increasing then rotate, if necessary, until $\pi=1\pi'$ for some $\pi'$ which is a subsequence of $\si'$.   But from the previous paragraph, $|\pi'|\le m$ which implies $|\pi|\le m+1$ as desired.  If $[\pi]$ is decreasing then we pick a representative $\pi=\pi'1$ and proceed as in the increasing case to get $|\pi|\le n+1$.

Now consider the possibility that $[\pi]$ does not contain $1$.  Again, we start with the subcase when $[\pi]$ is increasing.  Suppose, for simplicity, that $\pi$ contains an element of $x\in\de_{n,m,2}$ as the proof will be similar for the other deltas.  As before, $\pi$ can contain at most one element of each of $\de_{n,m,3}$ through $\de_{n,m,m+1}$.  Now $[\pi]$ can wrap around and pick up other elements.  But those elements must come before $x$.  And since $\de_{n,m,2}$ is decreasing, at most one other element can be added in this way.  It follows that $|\pi|\le m+1$.  On the other hand, if $[\pi]$ is decreasing then the proof is similar.  The only difference is that if one attempts to pick up elements of $\de_{n,m,2}$ before $x$ then this is impossible since such elements are larger than $x$ and $[\pi]$ is decreasing.  So $|\pi|\le n$ which is an even tighter bound.  This completes the demonstration of the theorem.
\end{proof}

\section{Pattern avoidance of doubletons}
\label{pad}

In this section we will enumerate $\Av_n[\Pi]$ for all $[\Pi]\sbs[\fS_4]$ with $\#[\Pi]=2$.  Any cyclic Wilf equivalences stated without proof are trivial.

Let us first dispose of the simplest singleton avoidance classes where $[\pi]\in[\fS_k]$ for $k<4$.  In $[\fS_2]$ there is only one cyclic permutation $[12]$ and it is easy to see that every $[\si]$ of length at least $2$ contains it.  In $[\fS_3]$ there are only the patterns $[123]$ and $[321]$, and these are only avoided by $[\de_n]$ and $[\io_n]$, respectively.

Callan~\cite{cal:pac} enumerated $\Av_n[\pi]$ for any given $[\pi]\in[\fS_4]$.  Recall the version of the Fibonacci numbers defined by $F_1=F_2=1$ and $F_n=F_{n-1}+F_{n-2}$ for $n\ge3$.   Unlike the case of linear permutations in $\fS_3$, there are no nontrivial Wilf equivalences.
\bth[\cite{cal:pac}]
For $n\ge2$ we have
\begin{align*}
\#Av_n[1234]=\#Av_n[1432]&= 2^n + 1 - 2n - \binom{n}{3},\\   \#Av_n[1243]=\#Av_n[1342]&= 2^{n-1}-n+1,\\[5pt]
\rule{130pt}{0pt}\#Av_n[1324]=\#Av_n[1423]&= F_{2n-3}.\rule{130pt}{0pt}\qed
\end{align*}
\eth

In presenting the enumerations for doubletons, we make the following conventions to facilitate locating a given result.  All cyclic patterns will be listed starting with $1$.  And all sets of cyclic patterns will be given in lexicographic order.  We will also use terms like ``just before" or ``just after" in $[\si]$ to refer the left-to-right order on the cylinder of a cyclic permutation in the form of Figure~\ref{42351}.  For example, in $[\si]=[42351]$ the $5$ comes just before $1$ and the $4$ just after.  We also say that an element $x$ is between $y$ and $z$ if it is in the subsequence of $[\si]$ traversed going left-to-right around the cylinder from $y$ to $z$.  Continuing our example, between $2$ and $5$ we have $3$, while between $5$ and $2$ we have $1$ and $4$.

One of our tools will be generating trees.  To the best of our knowledge, these trees were introduced by Chung, Grahamm, Hoggatt, and Kleiman~\cite{CGHK:nbp} for studying Baxter permutations.  Since then, they have become an integral technique in the theory of pattern avoidance~\cite{BBDFGG:gfg,bou:fcp,kre:pfs,Wes:gtc,wes:gtf}.  The {\em generating tree} for an avoidance class $\Av[\Pi]$, denoted $T[\Pi]$, has as its root the permutation $[12]$.  The children of any $[\si]\in\Av_n[\Pi]$ are all the $[\si']\in\Av_{n+1}[\Pi]$ which can be formed by inserting $n+1$ into one of the spaces of $[\si]$.  
A space, also called a {\em site}, where insertion of $n+1$ produces a permutation of the avoidance class is called {\em active} while the other spaces are {\em inactive}. 
A useful observation is that if a space is inactive it must be because inserting $n+1$ there results in copy of a forbidden pattern $[\pi]$ where $n+1$ plays the role of the largest element of $\pi$.
Once we have picked a representative $\si=\si_1\si_2\ldots\si_n$ for $[\si]$ we will label the spaces as $1,2,\ldots,n$ left to right where space $i$ comes between $\si_i$ and $\si_{i+1}$.
The nodes for $\Av_n[\Pi]$ will be said to be at {\em level $n$} in $T[\Pi]$.
We call the number of children of a vertex its {\em degree} which is denoted $\deg[\si]$.  Given $d\in\bbN$, suppose that every cyclic permutation with $\deg[\si]=d$ has children of degrees $c_1, c_2,\ldots, c_d$.  Then this is denoted by the {\em production rule}
$$
(d)\ra(c_1)(c_2)\ldots(c_d).
$$
There may be other nodes having some special characteristic $X$ which always produces nodes having characteristics $Y_1,Y_2,\ldots,Y_d$ which correspond to a production rule
$$
(X)\ra(Y_1)(Y_2)\ldots(Y_d).
$$
In particular, the characteristic of being the root of the tree is denote in a production rule by $(*)$.  We can also have production rules which mix numbers for degrees and letters for characteristics.  If $T[\pi]$ can be characterized by production rules, these can often be used to calculate $\#\Av_n[\Pi]$.

\begin{thm}
\label{1234,1243}
We have
$$
\{[1234],[1243]\} \equiv \{[1234],[1342]\} \equiv 
\{[1243],[1432]\} \equiv \{[1342],[1432]\}. 
$$
And for $n\ge3$
$$
\#\Av_n([1234],[1342])=2(n-2).
$$
\end{thm}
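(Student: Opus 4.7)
All four Wilf equivalences in the first part are trivial: applying the operations $r$, $c$, $rc$ from \eqref{cW=} to the pair $\{[1234],[1243]\}$ produces respectively $\{[1432],[1342]\}$, $\{[1243],[1432]\}$, and $\{[1234],[1342]\}$, so all four sets lie in a single trivial Wilf class.

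For the enumeration I would use the generating tree $T([1234],[1342])$. The crucial active-site criterion is: given $[\sigma]\in\Av_n([1234],[1342])$, inserting $n+1$ at site $i$ creates a forbidden cyclic $[1234]$ (with $n+1$ playing the maximum) iff the rotation $\tau^{(i)}=\sigma_{i+1}\sigma_{i+2}\cdots\sigma_i$ contains the linear pattern $123$, and creates a forbidden $[1342]$ iff $\tau^{(i)}$ contains the linear pattern $213$---in both cases the three other entries of the would-be forbidden cyclic copy must appear in $\tau^{(i)}$ in the linear order dictated by the pattern.  Hence site $i$ is active iff $\tau^{(i)}$ linearly avoids both $123$ and $213$.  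A short induction shows that any such sequence has its maximum in position $1$ or $2$ and that the property persists recursively, so $n=\max\sigma$ must occur in the first two positions of $\tau^{(i)}$, leaving only the two sites cyclically adjacent to $n$ as candidates.

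After rotating the representative so $\sigma_n=n$, these candidate sites are $i=n-1$ and $i=n-2$, active iff respectively
\begin{align*}
(\text{C1})\quad & \sigma_1\sigma_2\cdots\sigma_{n-1}\ \text{avoids linear}\ \{123,213\},\\
(\text{C2})\quad & \sigma_{n-1}\sigma_1\sigma_2\cdots\sigma_{n-2}\ \text{avoids linear}\ \{123,213\}.
\end{align*}
Let $\deg[\sigma]\in\{1,2\}$ be the number of these that hold.  Renormalizing each child so that its maximum $n+1$ sits at the end of the representative, a direct computation shows that insertion at site $n-1$ produces a child whose (C1) and (C2) agree with the parent's (C1) and (C2), while insertion at site $n-2$ produces a child whose new-(C1) equals the parent's (C2) but whose new-(C2) \emph{must} fail, because the old maximum $n$ then occupies position $3$ of the test sequence rather than position $1$ or $2$.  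This yields the production rules $(1)\to(1)$ and $(2)\to(1)(2)$.

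Direct inspection of $\Av_4=\{[1243],[1324],[1423],[1432]\}$ identifies $[1243]$ and $[1432]$ as the two degree-$2$ nodes and $[1324]$, $[1423]$ as the two degree-$1$ nodes.  Iterating the production rules from this base keeps the number of degree-$2$ nodes constant at $2$ while the number of degree-$1$ nodes satisfies $a_n=a_{n-1}+2$, giving $\#\Av_n([1234],[1342])=a_n+2=2(n-2)$.  The main obstacle is the renormalization bookkeeping in the production-rule step, in particular confirming that the $(n-2)$-site insertion always sends the old maximum $n$ to position $3$ of the relevant test sequence, which forces new-(C2) to fail and hence drops the child's degree to $1$.
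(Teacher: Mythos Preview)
Your argument is essentially correct and follows the same generating-tree strategy as the paper, arriving at the identical production rules $(1)\to(1)$ and $(2)\to(1)(2)$.  The technical route differs: the paper explicitly identifies the two degree-$2$ permutations at every level as $\delta=n,n{-}1,\ldots,2,1$ and $\epsilon=n,n{-}1,\ldots,3,1,2$ and proves directly that no other $[\sigma]$ can have degree $2$, whereas you never name these permutations and instead track the pair of boolean conditions (C1),(C2) through insertions.  Your active-site criterion (site $i$ active iff the rotation starting just after $i$ linearly avoids $\{123,213\}$) is a clean reformulation, and the bookkeeping you flag---that after an $(n{-}2)$-insertion the old maximum $n$ lands in position~$3$ of the child's (C2) test sequence, with two smaller entries preceding it, forcing a $123$ or $213$---is exactly right.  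The paper's explicit description of $\delta,\epsilon$ pays off later when computing the descent polynomials in Theorem~\ref{DnDob}(a); your abstract (C1)/(C2) tracking is arguably slicker for the bare enumeration.

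There is one small gap you should close: you assert $\deg[\sigma]\in\{1,2\}$ without checking $\deg\ge 1$.  In your framework this amounts to showing that (C1) always holds for $[\sigma]\in\Av_n([1234],[1342])$ with $\sigma_n=n$.  But this is immediate: a linear $123$ (respectively $213$) in $\sigma_1\cdots\sigma_{n-1}$ would combine with $\sigma_n=n$ to give a linear $1234$ (respectively $2134$) in $\sigma$, hence a cyclic $[1234]$ (respectively $[1342]$) in $[\sigma]$.  Once stated, this also confirms that every degree-$1$ node is of the ``(C1) holds, (C2) fails'' type, so your propagation analysis for $(1)\to(1)$ goes through without ambiguity.
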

\begin{proof}
We claim that $T=T([1234],[1342])$ has the following production rules
\begin{align*}
(*)&\ra (2)(2),\\
(1)&\ra(1),\\
(2)&\ra(1)(2).
\end{align*}
Once these are proved then the enumeration follows easily since one can inductively show that, for $n\ge3$, level $n$ consists of two nodes of degree $2$ and $2(n-3)$ nodes of degree $1$.  

It is easy to check the production rule at  levels $n=2$ and $3$, so we assume that $n\ge4$ and also that $[\si]\in\Av_n([1234],[1342])$. 
First of all, note that the site before $n$  is always active.  For if it were not then the result $[\si']$ of inserting $n+1$ would have a copy $\ka$ of one of the patterns containing $n+1$.  But $n$ can not be in $\ka$ since neither of the patterns have $4$ followed  immediately in the cycle by $3$.  So replacing $n+1$ by $n$ in $\ka$ would give a forbidden pattern in $[\si]$ which is a contradiction.  Thus every $[\si]$ at  has at least one child.  Also $\si$ has at most two children.  For suppose 
$$
\si'=n+1,\rho,n,\tau
$$
is the result of inserting $n+1$ in $\si$.  It follows that $|\rho|\le 1$ since if $\rho\ge2$ then $[\si']$ has a copy of either $[4123]$ or $[4213]$.  Thus $n+1$ must be inserted either directly before $n$ or two elements before $n$.

Now consider 
\begin{equation}
  \label{deep}
\de= n, n-1, \ldots, 3,2,1,\qmq{and}
\ep=n, n-1,\ldots, 3,1,2.  
\end{equation}
It is easy to check that both sites $n$ and $n-1$ are active in these permutations and so both have degree $2$.  It is also obvious that if one inserts $n+1$ in site $n$ in either permutation then one gets another permutation of the same form.

From what we have done, we can finish the proof if we show that $\deg[\si]=2$ implies $[\si]=[\de]$ or $[\si]=[\ep]$.
Write 
$$
\si = n\rho m
$$
where $m$ is the last element of $\si$ and $\rho$ is everything between $n$ and $m$.  Since $\deg[\si]=2$, site $n-1$ is active and inserting $n+1$ there yields
$$
\si'=n,\rho,n+1,m.
$$
Then $m\le 2$ since otherwise $[\si]$ contains a copy of $[4123]$ or $[4213]$ since $n\ge4$.  In the case $m=1$ we must have $\rho$ decreasing.  For if there is an ascent $x<y$ in $\rho$ then $[\si']$ contains $[x,y,n+1,1]$ which is a copy of $[2341]$, a contradiction.  So in this case $\rho$ is decreasing and $\si=\de$.
The other possibility is that $m=2$.  This forces the last element of $\rho$ to be $1$.  For if $1$ is elsewhere and $x$ is the last element of $\rho$ then then $[\si']$ contains
$[1,x,n+1,2]$ which is contradictory copy of $[1342]$.  Similarly to the first case, one can now show that $\rho$ is decreasing and so $\si=\ep$ as desired.
\end{proof}

Comparing our next result with the previous one will provide our first nontrivial Wilf equivalence.
\begin{thm}
\label{1234,1324}
We have
$$
\{[1234],[1324]\} \equiv \{[1423],[1432]\}. 
$$
And for $n\ge3$
$$
\#\Av_n([1234],[1324])=2(n-2).
$$
\end{thm}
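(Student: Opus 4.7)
The Wilf equivalence $\{[1234],[1324]\}\equiv\{[1423],[1432]\}$ is trivial via~(\ref{cW=}): one has $[1234]^r=[4321]$ which coincides cyclically with $[1432]$, and $[1324]^r=[4231]$ with $[1423]$.

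For the enumeration, I mimic the generating tree argument of Theorem~\ref{1234,1243}. Setting $T=T([1234],[1324])$, the plan is to verify the production rules
\begin{align*}
(*)&\ra(\de)(\de'),\\
(\de)&\ra(\de)(\de'),\\
(\de')&\ra(1)(1),\\
(1)&\ra(1),
\end{align*}
where at level $n\ge 3$ the type $(\de)$ denotes $[\de_n]=[n,n-1,\ldots,1]$, the type $(\de')$ denotes $[n-1,n,n-2,n-3,\ldots,1]$ (that is, $\de_n$ with its first two entries transposed), and $(1)$ refers to every other element of $\Av_n$. A routine induction from these rules produces $1+1+2(n-3)=2(n-2)$ nodes at level $n$.

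The central tool is the following active-site criterion. Rotate $\si\in\Av_n$ so that $n$ is its first entry and write $\si=n,b_1\ldots b_{n-1}$. Examining those $4$-subsequences of $\si$ that use $n$ shows that $b=b_1\ldots b_{n-1}$ must be \emph{top-two}, meaning every $b_j$ is among the two largest of $\{b_j,\ldots,b_{n-1}\}$. Moreover, inserting $n+1$ at a site of $\si$ yields an element of $\Av_{n+1}$ if and only if the resulting split $b=\tau\cdot\rho$, with $\rho$ the block to the right of the insertion, has $\tau$ top-two, $\rho$ decreasing, and $\max\tau\le\min\rho$ whenever both are non-empty. The decomposition $(\tau,\rho)=(b,\emptyset)$ is always valid; it corresponds to the site immediately before $n$ and reproves $\deg[\si]\ge1$, which is the analogue of the opening step of Theorem~\ref{1234,1243}.

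The main obstacle is to pin down when a second valid decomposition exists. The alternative $(\emptyset,b)$ is valid exactly when $b$ is decreasing, giving $\si=\de_n$. If instead both $\tau$ and $\rho$ are non-empty, then $b_1\in\{n-1,n-2\}$ forces $\max\tau\ge n-2$, hence $\min\rho=b_{n-1}\ge n-2$. The sub-case $b_{n-1}=n-2$ forces $b_1=n-1$ and then contradicts $\max\tau\le b_{n-1}$, so $b_{n-1}=n-1$; the top-two structure then pins down $b=n-2,n-3,\ldots,1,n-1$, giving $\si=\de'_n$. A direct computation shows that the two children of $[\de_n]$ are $[\de_{n+1}]$ and $[\de'_{n+1}]$, whereas each child of $[\de'_n]$ reduces (on removing $n+1$) to $[\de'_n]$ itself rather than to $[\de_n]$, so neither $[\de_{n+1}]$ nor $[\de'_{n+1}]$ is a child of $[\de'_n]$; hence both children of $[\de'_n]$ are of type $(1)$. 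The same removal argument, applied to the unique child of an arbitrary type-$(1)$ node, confirms the rule $(1)\ra(1)$.
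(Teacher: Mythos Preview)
Your proof is correct and follows essentially the same approach as the paper: both arguments establish the identical generating-tree production rules, identify the same two degree-$2$ permutations (your $[\de_n],[\de'_n]$ are the paper's $[\de],[\ep]$, just written with $n$ in front rather than $1$), and deduce the count by induction. Your explicit active-site criterion via the split $b=\tau\cdot\rho$ is a slightly more systematic packaging of what the paper does by direct pattern-checking, but the underlying logic is the same.
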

\begin{proof}
Let $D$ stand for the decreasing permutation and $E$ for the decreasing permutation with its largest two elements swapped. We consider the root $[12]$ to be of type $D$.
We will show that $T=T([1234],[1324])$ has production rules
\begin{align*}
(1)&\ra (1),\\
(D)&\ra (D)(E),\\
(E)&\ra (1)(1).
\end{align*}
It follows by induction that level $n\ge3$ of $T$ has a $D$, an $E$, and $2(n-3)$ nodes of degree one, proving the theorem.

The same demonstration as in the previous theorem shows that the site before $n$ in any $[\si]\in\Av_n([1234],[1324])$ is active.  So again, every such permutation has at least one child.
 Also, every $[\si]$ has at most two children.  Indeed,  write 
\begin{equation}
\label{si=1}
\si=1\si_2\ldots\si_n 
\end{equation}
and put $n+1$ in site $i\ge 3$.  Then $1,\si_2,\si_3,n+1$ is a copy of either $1234$ or $1324$, another contradiction.

Now consider  permutations corresponding to $D$ and $E$ at level $n$
\begin{equation}
   \label{deep2} 
\de =1,n,n-1,n-2,n-3,\ldots,2 \qmq{and}
\ep =1,n-1,n,n-2,n-3,\ldots,2.  
\end{equation}
It is easy to check that both sites $1$ and $2$ are active in $\de$, $\ep$.  So, by the previous paragraph, they both have degree $2$.  Furthermore, the two children of $\de$ have the form $D$ and $E$.

We will be done if we can show that $[\si]$ having two children implies $[\si]=[\de]$ or $[\ep]$.  Write $\si$ as in~\eqref{si=1}.
Since the active sites must be $1$ and $2$, and the site before $n$ must be active, either $\si_2=n$ or $\si_3=n$.
If $\si_2=n$ and there is an ascent $x<y$ in the rest of the permutation, then after inserting $n+1$ in position $2$ we have $[x,y,n,n+1]$ which is a copy of $[1234]$, a contradiction.  So in this case $[\si]=[\de]$.
Alternatively, suppose $\si_3=n$.  This forces $\si_2=n-1$, since if $\si_2=x<n-1$ then $n-1$ comes after $n$.  But inserting $n+1$ in position $1$ gives
$[x,n,n-1,n+1]$ which is a copy of $[1324]$. And similarly to the first case we see that the rest of $\si$ is decreasing.  The result is that $[\si]=[\ep]$.  This completes the proof.
\end{proof}

\begin{thm}
\label{1234,1423}
We have
$$
\{[1234],[1423]\} \equiv \{[1324],[1432]\}. 
$$
And for $n\ge1$
$$
\#\Av_n([1234],[1423])=1+\binom{n-1}{2}.
$$
\end{thm}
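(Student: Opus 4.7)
The plan is to establish a bijection $\Av_{n+1}([1234], [1423]) \longleftrightarrow \Av_n(\{123, 231\})$ between the cyclic avoidance class at level $n+1$ and the linear avoidance class of length-$n$ permutations avoiding both $123$ and $231$, and then enumerate the latter directly; combining these yields the formula after a reindexing. The Wilf equivalence $\{[1234], [1423]\} \equiv \{[1324], [1432]\}$ is trivial via complementation, since $[1234]^c = [1432]$ and $[1423]^c = [1324]$.

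For the bijection I would rotate each $[\si'] \in \Av_{n+1}([1234], [1423])$ so that $n+1$ sits in the first position, writing $\si' = (n+1)\tau_1 \tau_2 \ldots \tau_n$, and send $[\si']$ to the linear permutation $\tau := \tau_1 \ldots \tau_n$. A $123$-pattern $a<b<c$ in $\tau$ would place $\{a, b, c, n+1\}$ in cyclic order $a, b, c, n+1$ in $[\si']$, giving $[1234]$; a $231$-pattern $b, c, a$ in $\tau$ (with $a<b<c$) would place them in cyclic order $a, n+1, b, c$, giving $[1423]$. Hence $\tau \in \Av_n(\{123, 231\})$. The inverse sends $\tau \mapsto [(n+1)\tau_1 \ldots \tau_n]$; four-element subsets involving $n+1$ are handled by the same analysis run in reverse, and four-element subsets lying entirely within $\tau$ are handled by the key observation that for any $a<b<c<d$, each of the four linear rotations of $abcd$ and each of the four linear rotations of $adbc$ contains $123$ or $231$ as a length-three subpattern (for instance, $cdab$ contains the $231$-pattern $c, d, a$ and $dbca$ contains the $231$-pattern $b, c, a$), so no four elements of $[\tau]$ can sit in cyclic order $[1234]$ or $[1423]$.

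Next I would enumerate $a_n := \#\Av_n(\{123, 231\})$ by classifying each $\tau$ by the position $p$ of $n$ in it. Avoiding a $123$ with $n$ in its maximum role forces the prefix $\tau_1 \ldots \tau_{p-1}$ to be decreasing, while avoiding a $231$ with $n$ in its middle role forces all prefix values to lie below all suffix values whenever both sides are non-empty. The case $1 < p < n$ then pins $\tau$ down to $p-1, p-2, \ldots, 1, n, n-1, \ldots, p$, giving $n - 2$ avoiders; the case $p = n$ forces $\tau = n-1, n-2, \ldots, 1, n$, a single avoider; and the case $p = 1$ leaves $\tau_2 \ldots \tau_n$ as an arbitrary element of $\Av_{n-1}(\{123, 231\})$, contributing $a_{n-1}$. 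Summing gives the recursion $a_n = a_{n-1} + (n-1)$ with $a_1 = 1$, so $a_n = 1 + \binom{n}{2}$. Combined with the bijection this yields $\#\Av_n([1234], [1423]) = a_{n-1} = 1 + \binom{n-1}{2}$ for $n \geq 2$, and $n = 1$ is immediate.

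The hardest step will be the backward direction of the bijection, specifically the combinatorial observation that every linear rotation of $abcd$ or $adbc$ (for $a<b<c<d$) contains a $123$ or $231$ subpattern. This is where the cyclic-to-linear translation really lives, and while it reduces to an explicit check of only eight four-term sequences, missing a single rotation would undercut the whole argument, so the bookkeeping will demand care.
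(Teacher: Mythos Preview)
Your argument is correct, but it takes a different route from the paper's.  The paper rotates to write $\si = 1\,\rho\,n\,\tau$, then shows directly that avoidance of $[1234]$ and $[1423]$ forces $\rho$ and $\tau$ to be decreasing and $\rho$ to be an interval of consecutive integers; the count $1+\binom{n-1}{2}$ then drops out immediately by choosing the two endpoints of $\rho$ in $[2,n-1]$ (with repetition).  You instead anchor only at the maximum, strip it off, and biject onto the linear class $\Av_n(123,231)$, which you then enumerate recursively.  Your approach has the virtue of tying the cyclic class to a classical Simion--Schmidt linear class, at the cost of the eight-rotation check for the backward direction; the paper's approach is more self-contained and, crucially, yields an explicit parametrization of $\Av_n([1234],[1423])$ that is reused later (for example in computing the $\cdes$ generating function in Theorem~\ref{DnDob}(c)).

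One small point to tighten in your enumeration of $\Av_n(123,231)$: in the case $1<p<n$ you assert that $\tau$ is pinned down to $p{-}1,\dots,1,n,n{-}1,\dots,p$, but you have only argued that the prefix is decreasing and that prefix values lie below suffix values.  You also need the suffix to be decreasing; this follows because any prefix element together with an ascent in the suffix would give a $123$, but you should say so.
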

\begin{proof}
Suppose $[\si]\in\Av_n([1234],[1423])$ and write
\begin{equation}
\label{1rt}
 \si=1\rho n\tau   
\end{equation}
where $\rho$ and $\tau$ are the subsequences between $1$ and $n$, and between $n$ and $1$, respectively.  Now $\rho$ and $\tau$ must be decreasing since $[\si]$ avoids $[1234]$ and $[1423]$, respectively.  Furthermore, $\rho$ must consist of consecutive integers since, if not, then we have
$x<y<z$ such that $1zxny$ is a subsequence of $\si$.  So $[xnyz]$ is a copy of $[1423]$ in $[\si]$, which is a contradiction.  Conversely, it is easy to check that if $\si$ has the form~\eqref{1rt} with $\rho$ and $\tau$ decreasing and $\rho$ consecutive then $[\si]\in\Av_n([1234],[1423])$.  So we have characterized the elements of this class.

To finish the enumeration, if $\rho=\emp$ there is one corresponding $\si$.  But if $\rho\neq\emp$ then choosing the smallest and largest element of $\rho$ from the elements $2,3,\ldots,n-1$ completely determines $\si$.  Since these two elements could be equal, we are choosing $2$ elements from $n-2$ elements with repetition which is counted by $\binom{n-1}{2}$.
\end{proof}

The following result follows immediately from Theorem~\ref{EScyc}
\begin{thm}
\label{1234,1432}
We have
$$
\#\Av_n([1234],[1432])=0
$$
for $n\ge6$.\hqed
\end{thm}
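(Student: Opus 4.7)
The plan is to apply Theorem~\ref{EScyc} with the specific parameters $m=n=2$ and identify the forbidden patterns with the increasing/decreasing cyclic patterns that appear there. First I would observe that $[1234]=[\io_4]$ since $\io_4=1234$, and that $[1432]=[\de_4]$: unpacking the rotations of $\de_4=4321$ we get $[4321]=\{4321,3214,2143,1432\}$, which contains $1432$, so the two cyclic patterns coincide.

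Next I would invoke Theorem~\ref{EScyc} with $m=n=2$, which states that every cyclic permutation of length $mn+2=6$ contains either $[\io_{m+2}]=[\io_4]$ or $[\de_{n+2}]=[\de_4]$. By the identifications above, this means every $[\si]\in[\fS_6]$ contains either $[1234]$ or $[1432]$, so $\Av_6([1234],[1432])=\emp$. For any $n\ge 6$, a cyclic permutation $[\si]\in[\fS_n]$ avoiding both patterns would restrict (via deleting entries and relabeling) to cyclic permutations of length $6$ still avoiding both patterns, giving a contradiction; alternatively, one can simply re-apply Theorem~\ref{EScyc} to any $[\si]\in[\fS_n]$ with $n\ge 6$ after noting that containment of $[\io_4]$ or $[\de_4]$ at length $6$ propagates upward (equivalently, choose any $m',n'$ with $m'n'+2\le n$ and $m',n'\ge 2$).

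There is essentially no obstacle here, since everything has already been established in Theorem~\ref{EScyc}; the only thing to verify is the elementary identification $[1432]=[\de_4]$, which was left implicit because the paper's convention (stated earlier) is to list cyclic patterns starting with $1$, so $[\de_4]$ gets recorded as $[1432]$. The proof therefore reduces to a one-line application of the cyclic Erd\H{o}s--Szekeres theorem, which is why the paper phrases the result as following immediately.
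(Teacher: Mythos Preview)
Your proposal is correct and matches the paper's approach exactly: the paper states that the result ``follows immediately from Theorem~\ref{EScyc}'' and gives no further proof, and your argument (set $m=n=2$, identify $[\io_4]=[1234]$ and $[\de_4]=[1432]$, then use monotonicity of cyclic containment for $n>6$) is precisely the intended one-line deduction. The only minor cleanup is that your parenthetical ``choose any $m',n'$ with $m'n'+2\le n$'' does not literally fit the statement of Theorem~\ref{EScyc} (which is phrased for length exactly $mn+2$); the restriction argument you give just before it, or the observation that the proof of Theorem~\ref{EScyc} goes through verbatim for any length $\ge mn+2$, is the right way to handle $n>6$.
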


We now have, by comparison with Theorem~\ref{1234,1423}, another nontrivial Wilf equivalence.  
\begin{thm}
\label{1243,1324}
We have
$$
\{[1243],[1324]\} \equiv \{[1243],[1423]\}
\equiv \{[1324],[1342]\} \equiv \{[1342],[1423]\}. 
$$
And for $n\ge1$
$$
\#\Av_n([1324],[1342])=1+\binom{n-1}{2}.
$$
\end{thm}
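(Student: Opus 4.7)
The plan is to follow the structural template of Theorem~\ref{1234,1423}. Fix $[\si]\in\Av_n([1324],[1342])$ and pick the representative $\si=1\rho n\tau$, where $\rho$ is the subsequence between $1$ and $n$ and $\tau$ the subsequence between $n$ and $1$. First I will examine 4-element subsequences containing both $1$ and $n$. An inversion $a>b$ within $\rho$ (with $a$ preceding $b$) gives values $1,a,b,n$ in cyclic order with rank pattern $(1,3,2,4)$, hence cyclic pattern $[1324]$; so $\rho$ is increasing. An inversion with $a\in\rho$, $b\in\tau$, $a>b$ gives $1,a,n,b$ in cyclic order with rank pattern $(1,3,4,2)$, hence cyclic pattern $[1342]$; so $\max\rho<\min\tau$. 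Together these force $\rho=2,3,\ldots,k+1$ and $\tau$ to be a permutation of $\{k+2,\ldots,n-1\}$ for some $k\in\{0,1,\ldots,n-2\}$.

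Next I will derive the conditions on $\tau$. Taking any three elements $y_1,y_2,y_3$ of $\tau$ (in $\tau$-order) together with either $1$ or $n$, I enumerate the six possible linear patterns of $y_1y_2y_3$ and compute the resulting cyclic 4-pattern: with $1$, a $y$-pattern of $213$ yields $[1324]$ and a $y$-pattern of $231$ yields $[1342]$; with $n$, a $y$-pattern of $132$ yields $[4132]=[1324]$ and a $y$-pattern of $213$ yields $[4213]=[1342]$. Consequently $\tau$ must linearly avoid $\{132,213,231\}$. I will then verify sufficiency by checking the remaining 4-subset configurations, categorized by intersection with $\{1\},\{n\},\rho,\tau$; in the all-in-$\tau$ case, each of the eight linear rotations of $1324$ and $1342$ already contains one of $132,213,231$ as a 3-subpattern, so the conditions on $\tau$ suffice.

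Finally I will count. A short induction on the first entry shows that permutations of $[m]$ avoiding $\{132,213,231\}$ are exactly those of the form
\[
m,\,m-1,\,\ldots,\,m-j+1,\,1,\,2,\,\ldots,\,m-j \qquad(0\le j\le m-1),
\]
giving $m$ permutations for $m\ge 1$, together with the empty permutation when $m=0$. Summing the contributions from each $k\in\{0,1,\ldots,n-2\}$, where $m=n-2-k$, yields
\[
1+\sum_{m=1}^{n-2}m \;=\; 1+\binom{n-1}{2}.
\]

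The main obstacle will be the sufficiency verification, which demands a careful enumeration of all four-element subset types in $[\si]$ (by their intersection sizes with $\{1\},\{n\},\rho,\tau$) together with the determination of the resulting cyclic pattern in each case. This bookkeeping is elementary but, as in the proof of Theorem~\ref{1234,1423}, is the most delicate part of the argument.
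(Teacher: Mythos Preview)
Your proof is correct and follows essentially the same route as the paper: decompose $\si=1\rho n\tau$, show $\rho$ is an increasing initial segment and $\tau$ avoids $\{132,213,231\}$, then count. The only cosmetic differences are that the paper cites~\cite{DDJSS:pps} for the structure of $\Av(132,213,231)$ and packages the result as the inflation $\si=132\spn{\io_j,\de_k,\io_l}$, and it organizes the final count by the length of the trailing increasing block rather than by $|\rho|$; your direct derivation and summation $1+\sum_{m=1}^{n-2}m$ are equally valid.
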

\begin{proof}
Take $[\si]\in\Av_n([1324],[1342])$ and write $\si$ as in~\eqref{1rt}.  Then $\rho$ is increasing since $[\si]$ avoids $[1324]$.  And every element of $\rho$ is smaller than every element of $\tau$ since $[\si]$ avoids $[1342]$.
To avoid a copy of one of the forbidden patterns containing the $1$ of $\si$ we must have that $\tau$ avoids $213$ and $231$.  And to avoid a copy of $[1324]$ where $n$ plays the role of $4$, it must be that $\tau$ avoids $132$.  The $\tau$ which avoid these three pattern are exactly those which are inflations of the form
$\tau=21\spn{\de_k,\io_l}$ for some $k,l\ge0$
(see the chart on page 2773 of~\cite{DDJSS:pps}).
Absorbing the $1$ and $n$ of $\si$ into $\rho$ and $\tau$, respectively, we see that 
\begin{equation}
\label{132idi}
    \si=132\spn{\io_j,\de_k,\io_l}
\end{equation}
where $j,k\ge1$ and $l\ge0$.  
Again, it is not hard to check that for every $\si$ of this form we have $[\si]\in\Av_n([1324],[1342])$.

To enumerate these $\si$, we distinguish two cases.  If $l\ge2$ then picking the smallest and largest elements of the copy of $\io_l$ from $2,3,\ldots,n-1$ completely determines $\si$ .  So in this case there are $\binom{n-2}{2}$ choices.  If $l\le1$ then the copy of $\io_l$ can be appended to the copy of $\de_k$ so that $\si=12[\io_j,\de_{n-j}]$.  Since we must have $1$ and $n$ in the ascending and decreasing subsequences, there are now $n-1$ choices.  Adding the two counts given the desired result.
\end{proof}

\begin{thm}
\label{1243,1342}
For $n\ge4$ we have
$$
\#\Av_n([1243],[1342])=4.
$$
\end{thm}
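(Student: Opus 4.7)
The plan is to characterize $\Av_n([1243],[1342])$ directly and show it consists of exactly four cyclic permutations. After rotating to put $\si_1=1$, write $\si=1\rho n\tau$ where $\rho$ is the factor strictly between $1$ and $n$ and $\tau$ is the factor strictly between $n$ and $1$ (read cyclically). The first step is to show that $\rho=\emp$ or $\tau=\emp$: otherwise, for any $x\in\rho$ and $y\in\tau$ the linear subsequence $1,x,n,y$ of $\si$ has pattern $1243$ when $x<y$ and $1342$ when $x>y$, contradicting the avoidance hypothesis.

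Next, suppose $\rho=\emp$, so $\si=1,n,\tau$ with $\tau$ permuting $\{2,\ldots,n-1\}$. The claim is that $\tau$ must be monotone. If it is not, then $\tau$ contains one of the three-element patterns $132,\,213,\,231,\,312$ on some triple $a<b<c$ from $\{2,\ldots,n-1\}$. The $132$ case (values $a,c,b$ in $\tau$) produces the linear subsequence $1,a,c,b\iso 1243$, and the $231$ case (values $b,c,a$) produces $1,b,c,a\iso 1342$. The $213$ and $312$ cases are the subtle ones and require cyclic subsequences that wrap around through $n$: for $213$ on $b,a,c$, the cyclic subsequence $b,a,c,n$ is of type $2134$, and $[2134]=[1342]$; for $312$ on $c,a,b$, the cyclic subsequence $n,c,a,b$ is of type $4312$, and $[4312]=[1243]$. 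Hence $\tau$ is monotone, producing the two permutations $[1,n,2,3,\ldots,n-1]$ and $[\de_n]$.

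The case $\tau=\emp$ is handled by the analogous argument applied to $\rho$ (the identical four subcase analysis, with cyclic subsequences now wrapping through $n$ back to $1$), or more efficiently by invoking the reverse operation, which is a bijection on $\Av_n([1243],[1342])$ since $[1243]^r=[1342]$ and reverse interchanges the roles of $\rho$ and $\tau$. This produces the remaining permutations $[\io_n]$ and $[1,n-1,n-2,\ldots,2,n]$. To close the argument I will verify that each of the four permutations genuinely avoids both forbidden patterns, via a brief case analysis of $4$-element cyclic subsets according to whether they contain $1$, $n$, both, or neither; the induced cyclic pattern will always lie in $\{[1234],[1324],[1423],[1432]\}$. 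The four permutations are pairwise distinct for $n\ge 4$, since their cyclic descent numbers $1,\,n-1,\,2,\,n-2$ separate them when $n\ge 5$ and the case $n=4$ is checked directly.

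The main obstacle is the monotonicity step. The subtle point is that the $213$ and $312$ patterns inside $\tau$ are not immediately ruled out by linear subsequences beginning at $1$ alone, so one must be careful to form cyclic subsequences that include $n$ and wrap around the cycle in order to produce the contradictions.
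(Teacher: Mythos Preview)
Your proof is correct and follows essentially the same approach as the paper's: write $\si=1\rho n\tau$, show one of $\rho,\tau$ is empty via the subsequence $1,x,n,y$, and then force the remaining block to be monotone by ruling out each of $132$, $231$, $213$, $312$ using a four-element subsequence involving $1$ or $n$. The paper handles the $213$ and $312$ cases by prepending $n$ to get the linear patterns $4213$ and $4312$ inside $\si$, which is the same cyclic subsequence you build (just read from a different starting point), so your emphasis on ``wrapping around'' is not really an additional subtlety.
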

\begin{proof}
Take $[\si]\in\Av_n([1243],[1342])$ and write $\si$ as in~\eqref{1rt}.  Then $\rho$ and $\tau$ can not both be nonempty.  For if $x\in\rho$ and $y\in\tau$ then 
$1xny$ is a copy of either $1243$ or $1342$.

Assume first that $\rho=\emp$ so that 
\begin{equation}
\label{1nt}
\si=1n\tau.    
\end{equation}
Then $\tau$ must be increasing or decreasing.  For suppose it was neither.  Then it would contain a copy of one of the patterns $132$, $231$, $213$, or $312$.  In the first two cases this would give, together with the $1$,  a copy of $1243$ or $1342$ in $\si$.  And in the last two cases, prepending $n$ gives a copy of $4213$ or $4312$.  Conversely, if $\si$ is given by~\eqref{1nt} with $\tau$ increasing or decreasing then it is easy to verify that  $[\si]\in\Av_n([1243],[1342])$.

Using the same ideas, one can also show that if $\tau=\emp$ then one gets exactly two elements of $\Av_n([1243],[1342])$, of the form $\si=1\rho n$ where $\rho$ is either increasing or decreasing.  Thus there are a total of four elements in the avoidance class.
\end{proof}

\begin{thm}
\label{1324,1423}
For $n\ge3$ we have
$$
\#\Av_n([1324],[1423])=2^{n-2}.
$$
\end{thm}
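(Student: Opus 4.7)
My plan is to give a structural description of each $[\si]\in\Av_n([1324],[1423])$, from which the enumeration falls out directly. Choose the representative $\si=1\rho n\tau$ where $\rho$ is the subsequence of $\si$ running from just after $1$ to just before $n$ and $\tau$ runs from just after $n$ back around to just before $1$; thus $\rho$ and $\tau$ partition $\{2,\ldots,n-1\}$ as sets. I claim that $[\si]$ lies in the avoidance class if and only if $\rho$ is increasing and $\tau$ is decreasing.

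For necessity I rule out descents in $\rho$ and ascents in $\tau$. A descent $r_1>r_2$ in $\rho$ yields the linear subsequence $1,r_1,r_2,n$ of $\si$ with pattern $1324$. An ascent $t_1<t_2$ in $\tau$ yields the cyclic subsequence $1,n,t_1,t_2$ of $[\si]$, whose standardization starting from the smallest entry is $1,4,2,3$, a copy of $[1423]$. Hence $\rho$ must be increasing and $\tau$ decreasing.

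For sufficiency I must verify that once $\rho$ is increasing and $\tau$ decreasing, no cyclic four-element subsequence of $[\si]$ can produce either forbidden pattern. My plan is to classify such subsequences by the quadruple $(a,r,c,t)$ recording how many elements come from $\{1\}$, $\rho$, $\{n\}$, and $\tau$, where $a+r+c+t=4$ and $a,c\in\{0,1\}$. In each case the cyclic order is pinned down by the increasing/decreasing conditions, modulo how $\rho$-entries interleave in value with $\tau$-entries, and standardizing at the smallest element yields only one of the four allowed cyclic patterns $[1234]$, $[1243]$, $[1342]$, or $[1432]$. The main obstacle is the mixed subcase $(0,2,0,2)$, where two $\rho$-values and two $\tau$-values admit six value-interleavings that must each be handled, but every such subcase is a one-line check.

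Finally, having characterized the class, each $[\si]$ is uniquely encoded by the subset $R\sbe\{2,\ldots,n-1\}$ chosen to form $\rho$ (with the complement forming $\tau$), and there are $2^{n-2}$ such subsets, giving $\#\Av_n([1324],[1423])=2^{n-2}$.
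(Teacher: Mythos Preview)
Your proof is correct. Your characterization — that $[\si]\in\Av_n([1324],[1423])$ if and only if in the representative $\si=1\rho n\tau$ the block $\rho$ is increasing and $\tau$ is decreasing — is equivalent to what the paper proves, but you arrive at it and count it differently. The paper instead fixes the representative beginning with $n$ and argues recursively: writing $\si=n,\rho,n-1,\tau$, one of $\rho,\tau$ must be empty (else $4132$ or $4231$ appears), so $n-1$ is adjacent to $n$; then $n-2$ must sit at one end of the block $\{n-1,n\}$, and so on down to $2$, yielding $2^{n-2}$ binary choices. Their characterization is that every top set $\{k,\ldots,n\}$ forms a contiguous cyclic arc, which is the same as your unimodal shape $1,\rho,n,\tau$ viewed from $1$ instead of from $n$. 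The paper's recursive peeling is slicker on the sufficiency side (it only has to check one local obstruction at each step and then appeal to induction), whereas your direct approach front-loads the work into the $(a,r,c,t)$ case analysis; on the other hand, your bijection with subsets $R\subseteq\{2,\ldots,n-1\}$ makes the count immediate and transparent without an inductive argument. One small remark: your sufficiency argument is stated as a plan rather than carried out; in the spirit of the paper (which also says ``checking, as usual, that all such permutations are actually in the avoidance set'') this is acceptable, and indeed each of the interleaving subcases in $(0,2,0,2)$ and the remaining types lands on one of $[1234],[1243],[1342],[1432]$ as you claim.
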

\begin{proof}
Take $[\si] \in \Av_n([1324],[1423])$ and write
$$
\si = n,\rho,n-1,\tau.
$$
Similar to the previous proof, one of $\rho$ or $\tau$ must be empty since otherwise $4132$ or $4231$ is a pattern in $\si$.  If $\rho=\emp$ then one shows similarly that $n-2$ either begins or ends $\tau$.  Continuing in this manner, we see that there are $2$ choices for the positions of $n-1,n-2,\ldots,2$.  Checking, as usual, that all such permutations are actually in the avoidance set,  the enumeration follows.
\end{proof}

\section{Three or more patterns}
\label{tmp}

We will now compute $\#\Av_n[\Pi]$ for $\Pi\sbe\fS_n$ having $\#\Pi\ge3$.  Will will not consider those $[\Pi]$ containing both $[1234]$ and $[1432]$ since for such classes $\#\Av_n[\Pi]=0$ for $n\ge6$ as in Theorem~\ref{1234,1432}.

\begin{thm}
\label{1234,1243,1324}
We have
$$
\{[1234],[1243],[1324]\} \equiv \{[1234],[1324],[1342]\}
\equiv \{[1243],[1423],[1432]\} \equiv \{[1342],[1423],[1432]\}. 
$$
And for $n\ge4$
$$
\#\Av_n([1234],[1324],[1342])=3.
$$
\end{thm}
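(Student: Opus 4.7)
The plan is to characterize every $[\si]\in\Av_n([1234],[1324],[1342])$ by writing a rotation of $\si$ in the form $\si=1\rho n\tau$, where $\rho$ is the subsequence between $1$ and $n$ on the cylinder and $\tau$ is the subsequence between $n$ and $1$.  Together $\rho$ and $\tau$ partition $\{2,\ldots,n-1\}$, and the argument will derive strong structural constraints on both.

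First I would show that $|\rho|\le 1$.  If $\rho$ contained two elements $\rho_i,\rho_j$ with $\rho_i$ preceding $\rho_j$, then the 4-subset $\{1,\rho_i,\rho_j,n\}$ appears in cyclic order $1,\rho_i,\rho_j,n$, which normalizes to $[1234]$ when $\rho_i<\rho_j$ and to $[1324]$ when $\rho_i>\rho_j$, both forbidden.

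For the case $|\rho|=0$ with $\si=1,n,\tau_1,\ldots,\tau_{n-2}$, I would examine all 4-subsets of the form $\{1\}\cup S$ or $\{n\}\cup S$ where $S$ is a 3-element subsequence of $\tau$.  A brief case analysis on the linear pattern $p\in\fS_3$ of $\tau$ on $S$ shows that $p\in\{123,213,231\}$ via the subset with $1$, and $p\in\{123,132,213\}$ via the subset with $n$, each produce one of the three forbidden cyclic patterns.  Hence $\tau$ must avoid all of $\{123,132,213,231\}$ linearly, which forces $\tau_i$ to be the maximum of $\{\tau_i,\ldots,\tau_{n-2}\}$ whenever at least two further entries exist.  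The only two permutations of $\{2,\ldots,n-1\}$ satisfying this are the strictly decreasing one and the one obtained from it by swapping its last two entries, giving the cyclic permutations $[1,n,n-1,\ldots,2]$ and $[1,n,n-1,\ldots,4,2,3]$.

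For the case $|\rho|=1$ with $\si=1,r,n,\tau$, the 4-subset $\{1,r,n,z\}$ for $z\in\tau$ has cyclic order $1,r,n,z$, normalizing to $[1342]$ precisely when $z<r$; hence every $z\in\tau$ satisfies $z\ge r$, which combined with $\tau=\{2,\ldots,n-1\}\setminus\{r\}$ forces $r=2$.  Then the subset $\{1,2,x,y\}$ with $x$ before $y$ in $\tau$ gives cyclic pattern $[1234]$ whenever $x<y$, so $\tau$ must be decreasing, yielding the third permutation $[1,2,n,n-1,\ldots,3]$.  A final verification that each of the three candidates has all 4-element subsets with cyclic pattern in $\{[1243],[1423],[1432]\}$ confirms they lie in the avoidance class.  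The principal technical obstacle is translating each position-based cyclic order in $\si$ into its correctly normalized cyclic pattern, which I would organize case-by-case according to which of $\{1,r,n\}$ lie in the 4-subset under consideration.
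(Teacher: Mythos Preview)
Your argument is correct and complete, but it proceeds quite differently from the paper.  The paper's proof is a two-line reduction: it invokes the earlier characterization of $\Av_n([1324],[1342])$ (Theorem~\ref{1243,1324}) as precisely the inflations $\si=132\langle\io_j,\de_k,\io_l\rangle$ with $j,k\ge1$ and $l\ge0$, and then observes that imposing $[1234]$-avoidance forces $j+l\le3$ and $j\le2$, leaving only $(j,l)\in\{(1,1),(1,2),(2,1)\}$.  Your approach instead works directly from the decomposition $\si=1\rho n\tau$, first bounding $|\rho|\le1$ via $\{[1234],[1324]\}$, then in each case extracting linear pattern constraints on $\tau$ from carefully chosen $4$-subsets, and finally verifying the three survivors by hand.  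The paper's route is much shorter because the heavy lifting was already done in Theorem~\ref{1243,1324}; your route is self-contained and does not rely on that prior structure theorem, at the cost of more casework.  Both arrive at the same three cyclic permutations $[1,n,n-1,\ldots,2]$, $[1,n,n-1,\ldots,4,2,3]$, and $[1,2,n,n-1,\ldots,3]$, corresponding exactly to the paper's $(j,l)=(1,1),(1,2),(2,1)$.
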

\begin{proof}
If $[\si]\in\Av_n([1234],[1324],[1342])$ then $[\si]$ avoids $[1324]$ and $[1342]$.  So, by the proof of Theorem~\ref{1243,1324},  we can write $\si$ in the form~\eqref{132idi} for $j,k,l\ge1$.  But since $[\si]$ also avoids $[1234]$ we must have $j+l\le 3$.  For the same reason, $j\le 2$ since if $j=3$ then the copy of $\io_3$ and one element of the copy of $\de_k$ would form a $[1234]$.  Thus the only possibilities are $(j,l)=(1,1)$, $(1,2)$, or $(2,1)$ which proves the result.
\end{proof}

\begin{thm}
\label{1234,1243,1342}
We have
$$
\{[1234],[1243],[1342]\} \equiv \{[1243],[1342],[1432]\}. 
$$
And for $n\ge5$
$$
\#\Av_n([1234],[1243],[1342])=2.
$$
\end{thm}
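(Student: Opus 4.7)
The plan is to bootstrap off Theorem~\ref{1243,1342}, which already characterizes $\Av_n([1243],[1342])$ as a set of four permutations for $n\ge4$, and then simply determine which of these four also avoid $[1234]$.

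More precisely, I would start by taking $[\si]\in\Av_n([1234],[1243],[1342])$, writing $\si=1\rho n\tau$ as in~\eqref{1rt}, and invoking (the proof of) Theorem~\ref{1243,1342} to conclude that $[\si]$ is one of the four permutations
\[
[1,n,2,3,\ldots,n-1],\qquad [1,n,n-1,\ldots,2],\qquad [1,2,3,\ldots,n-1,n],\qquad [1,n-1,n-2,\ldots,2,n].
\]
The task then reduces to checking which of these avoid $[1234]$.

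The first and third are immediately ruled out. The third is $[\io_n]$, which contains $[1234]$ trivially for $n\ge4$. The first contains the linear subsequence $1,2,3,4$ in its initial representative, hence a copy of $[1234]$, whenever $n\ge5$. The second is $[\de_n]$, which has no increasing subsequence of length more than $2$ in any rotation and so clearly avoids $[1234]$. The only real verification is for $[1,n-1,n-2,\ldots,2,n]$; here I would argue as follows. A copy of $[1234]$ would be a $4$-term increasing subsequence $a<b<c<d$ in some rotation. If the copy uses the element $1$, then $1$ must come first in the rotation, so the remaining three terms $b<c<d$ are a $3$-term increasing subsequence of the suffix $n-1,n-2,\ldots,2,n$; but in that suffix, the only element exceeding its predecessors is $n$, so no such triple exists. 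If the copy does not use $1$, then all four terms come from $\{2,3,\ldots,n\}$ in the cyclic order $n-1,n-2,\ldots,2,n$; reading cyclically, the only ascent is the final one into $n$, so again no $4$-term increasing subsequence exists. Hence $[1,n-1,n-2,\ldots,2,n]$ belongs to the avoidance class.

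The only step requiring any care is the last verification, and even there the limited number of ascents in the candidate permutation makes the case analysis essentially immediate. Combining the two surviving permutations yields $\#\Av_n([1234],[1243],[1342])=2$ for $n\ge5$, completing the proof.
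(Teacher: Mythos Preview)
Your proposal is correct and follows essentially the same approach as the paper: both invoke the characterization from Theorem~\ref{1243,1342} to reduce to four candidate permutations and then eliminate the two with an increasing block of length at least three. The paper phrases this as ``$|\rho|\ge3$ forces $\rho$ to be decreasing,'' while you check the four cases individually; your version is slightly more explicit in verifying that the two survivors actually avoid $[1234]$, which the paper leaves implicit.
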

\begin{proof}
If $[\si]\in\Av_n([1234],[1243],[1342])$ then $[\si]$ avoids $[1243]$ and $[1342]$.  So, by the proof of Theorem~\ref{1243,1342}, we can write \begin{equation}
\label{sixy}
  \si=xy\rho  
\end{equation} 
where $\{x,y\}=\{1,n\}$ and $\rho$ is either increasing or decreasing.  Since $n\ge5$ we have $|\rho|\ge3$.  But $[\si]$ also avoides $[1234]$ and this forces $\rho$ to be decreasing.  So there are two choices for $[\si]$ depending on the values of $x$ and $y$.
\end{proof}

\begin{thm}
\label{1234,1243,1423}
We have
$$
\{[1234],[1243],[1423]\} \equiv \{[1234],[1342],[1423]\}
\equiv \{[1243],[1324],[1432]\} \equiv \{[1324],[1342],[1432]\}. 
$$
And for $n\ge2$
$$
\#\Av_n([1234],[1342],[1423])=n-1.
$$
\end{thm}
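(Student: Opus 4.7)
The plan is to build on the characterization of $\Av_n([1234], [1423])$ established inside the proof of Theorem~\ref{1234,1423}: every $[\si]$ in that class has a representative of the form $\si = 1\rho n\tau$ with $\rho$ and $\tau$ both decreasing and $\rho$ consisting of consecutive integers. I would then impose the additional restriction of avoiding $[1342]$ on this family to pin down $\Av_n([1234], [1342], [1423])$.

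Write $\rho = r, r-1, \ldots, s$ when $\rho \ne \emp$, so that $2 \le s \le r \le n-1$. The key observation is that if $s \ge 3$, then $2 \in \tau$, and the subsequence $1, s, n, 2$ of $\si$ is order-isomorphic to $1342$ (because $1 < 2 < s < n$), giving a forbidden cyclic pattern. Hence either $\rho = \emp$ or $s = 2$. The surviving candidates are $\si_0 = 1, n, n-1, \ldots, 2$ together with $\si_r = 1, r, r-1, \ldots, 2, n, n-1, \ldots, r+1$ for $r \in \{2, 3, \ldots, n-1\}$, giving $1 + (n-2) = n-1$ cyclic permutations in all.

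For the converse I would check that each such $[\si]$ really does avoid $[1342]$. The crucial structural fact is that deleting the entry $1$ from $\si$ leaves a cyclic arrangement of $\{2, \ldots, n\}$ that is simply a rotation of the decreasing permutation $\de_{n-1}$. Consequently, any 4-subset of $[n]$ not containing $1$ has its four values in fully decreasing cyclic order, yielding only a $[1432]$. For a 4-subset $\{1, b, c, d\}$ with $b < c < d$, a short case analysis according to how many of $b, c, d$ lie in each of the (at most two) decreasing runs of the cycle obtained from $\si$ by removing $1$ shows that the resulting cyclic order is always of type $[1432]$, $[1324]$, or $[1243]$, and in particular never $[1342]$.

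The main obstacle is organizing the converse case analysis so that it covers all configurations of where $b, c, d$ sit relative to the threshold $r$; identifying the removal of $1$ with a rotation of $[\de_{n-1}]$ is what keeps it manageable, since the cyclic order of any four values avoiding $1$ is then completely determined and reinserting $1$ only prepends one extra position to the cylinder.
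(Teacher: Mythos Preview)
Your argument is correct, but it follows a different route from the paper.  The paper works inside the generating tree $T([1234],[1342])$ established in Theorem~\ref{1234,1243} and prunes it by the extra pattern $[1423]$: the site before $n$ stays active, both active sites of $\de$ survive, but the second active site of $\ep$ (in the notation of~\eqref{deep}) becomes inactive because the insertion creates the copy $[1,n+1,2,n]$ of $[1423]$.  This yields the production rules $(*)\ra(1)(2)$, $(1)\ra(1)$, $(2)\ra(1)(2)$, from which the count $n-1$ follows by induction.

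You instead start from the explicit description of $\Av_n([1234],[1423])$ in the proof of Theorem~\ref{1234,1423} and impose avoidance of $[1342]$ directly.  Your observation that removing $1$ from each surviving $\si_r$ leaves a cyclic rotation of $\de_{n-1}$ makes the converse verification clean: any four entries not involving $1$ form a $[1432]$, and your case split on how many of $b,c,d$ exceed $r$ exhausts the patterns $[1432]$, $[1243]$, $[1324]$---never $[1342]$.  The trade-off is that the paper's tree argument is more uniform with the surrounding results (and immediately recycles in Theorem~\ref{1234,1324,1423}), whereas your approach gives an explicit list $[\si_0],[\si_2],\ldots,[\si_{n-1}]$ of the avoidance class, which is useful if one later wants, say, the $\cdes$ polynomial.
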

\begin{proof}
We will show that $T=T([1234],[1342],[1423])$ has production rules
\begin{align*}
(*)&\ra (1)(2),\\
(1)&\ra(1),\\
(2)&\ra(1)(2).
\end{align*}
Then, by induction, level $n\ge2$ of $T$ will contain one node of degree $2$ and $n-2$  nodes of degree $1$.  Checking the root is easy, so assume $n\ge3$.

By Theorem~\ref{1234,1243}, $T$ is a subtree of $T([1234],[1342])$.  So we just need to check which nodes of that tree also avoid $[1423]$.  As in the proof of that theorem, the site before $n$ in $[\si]$ at level $n$ in $T$ is still active since $4$ is not followed immediately by $3$ in $[1423]$.  Thus it suffices to show that both sites of $\de$ remain active, but only one in $\ep$ where $\de,\ep$ are defined by~\eqref{deep}.  Indeed, the two sites of $\de$ give rise to copies of $\de$ and $\ep$ at level $n+1$ of $T$.  But site $n-1$ of delta which was active in the larger tree is now inactive since inserting $n+1$ there gives the copy $[1,n+1,2,n]$ of $[1423]$.  This completes the proof.
\end{proof}

We now have, in comparison with the previous theorem, a nontrivial Wilf equivalence.
\begin{thm}
\label{1234,1324,1423}
We have
$$
\{[1234],[1324],[1423]\} \equiv \{[1324],[1423],[1432]\}. 
$$
And for $n\ge2$
$$
\#\Av_n([1234],[1324],[1423])=n-1.
$$
\end{thm}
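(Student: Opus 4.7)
The plan is to give a direct structural characterization of the avoidance class. For any $[\si]\in\Av_n([1234],[1324],[1423])$, choose the representative beginning with $1$ and decompose it as $\si=1\rho n\tau$, where $\rho$ is the subword strictly between $1$ and $n$ and $\tau$ is the subword after $n$. I claim that $[\si]$ lies in the class precisely when $|\rho|\le 1$ and $\tau$ is decreasing, yielding $1+(n-2)=n-1$ permutations: one for $\rho=\emp$ and one for each singleton $\rho=\{k\}$ with $k\in\{2,\ldots,n-1\}$.

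Necessity comes from two elementary observations. First, if $\tau$ contained an ascent $x<y$, then the linear subsequence $1,n,x,y$ would realize the pattern $1423$, which is forbidden. Second, if $\rho$ contained two elements $x,y$ appearing in that linear order, then the subsequence $1,x,y,n$ would realize $1234$ (when $x<y$) or $1324$ (when $x>y$), both also forbidden. Either conclusion would contradict the avoidance of $[\si]$.

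For sufficiency, when $\rho=\emp$ so $\si=1,n,n-1,\ldots,2$, every cyclic rotation splits into at most two decreasing runs, making its longest increasing subsequence of length at most $2$; since each of $1234$, $1324$, and $1423$ contains an increasing subsequence of length three (the subsequences $123$, $134$, and $123$ of the respective ranks), no such pattern can appear in any rotation. When $\rho=\{k\}$ so $\si=1,k,n,n-1,\ldots,k+1,k-1,\ldots,2$, I verify avoidance by showing that for every four-element subset $\{a,b,c,d\}$ with $a<b<c<d$, the cyclic order of these four values, starting from $a$, is one of the three allowed patterns $1243$, $1342$, or $1432$. The argument hinges on where $k$ sits relative to the subset: knowing the explicit positions (namely $k$ at position $2$, $n$ at position $3$, and the remaining values in two descending runs afterwards), I can trace the cyclic traversal from $a$ and check that the last two elements encountered always form a descent, which is precisely the condition that separates the allowed four-element cyclic patterns from the three forbidden ones.

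The main obstacle is the sufficiency check in the $\rho=\{k\}$ case, which requires a case analysis based on how $k$ partitions the four-subset, according to which of $a,b,c,d$ lie above $k$, below $k$, equal $k$, or equal $n$. Each individual case is routine once one writes out the cyclic reading from $a$, but the bookkeeping across all the cases is the most delicate aspect of the argument.
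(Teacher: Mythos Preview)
Your argument is correct and takes a genuinely different route from the paper. The paper works inductively via the generating tree: it observes that $T([1234],[1324],[1423])$ is a subtree of $T([1234],[1324])$ (analyzed in an earlier theorem), checks that the site just before $n$ stays active, and then shows that of the two special permutations $\de=1,n,n-1,\ldots,2$ and $\ep=1,n-1,n,n-2,\ldots,2$ only $\de$ keeps its second active site when $[1423]$ is added to the forbidden set. This yields the production rules $(*)\to(1)(2)$, $(1)\to(1)$, $(2)\to(1)(2)$ and hence $n-1$ nodes at level $n$. Your approach instead gives a closed-form description of the entire class: writing $\si=1\rho n\tau$, you show membership is equivalent to $|\rho|\le 1$ and $\tau$ decreasing, and then count directly.

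The necessity half of your argument is clean and complete. For sufficiency, your observation that the three forbidden cyclic $4$-patterns are exactly those whose representative beginning with $1$ ends in an ascent, while the three allowed ones end in a descent, is a nice organizing principle; the case analysis you outline (splitting on which of $a,b,c,d$ equal $1$, equal $k$, or lie in the decreasing tail) does go through routinely, and in every case the cyclic reading from $a$ ends with a descent. Your method has the advantage of being self-contained---it does not rely on the earlier structural analysis of $\Av_n([1234],[1324])$---and it produces the explicit list of avoiders. The paper's generating-tree argument is shorter on the page because it leverages that prior work, and it fits the inductive machinery used throughout the section. The stated Wilf equivalence with $\{[1324],[1423],[1432]\}$ is trivial (complementation), which is why neither you nor the paper dwells on it.
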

\begin{proof}
It suffices to show that $T=T([1234],[1324],[1423])$ satisfies the same production rules as in the previous theorem.  Now $T$ is a subtree of $T([1234],[1324])$ which was constructed in the proof of Theorem~\ref{1234,1324}.  And we see in the usual way that the site before $n$ in any $[\si]$ remains active in $T$ because $4$ is not immediately followed by $3$ in $[1423]$.  

So it suffices to show, with $\de$ and $\ep$ as in~\eqref{deep2}, that  site $1$ remains active in $\de$, but not in $\ep$.  Indeed, inserting $n+1$ in this site of $\de$ just produces another descending sequence.  But in $\ep$ such a placement gives the copy $[1,n+1,n-1,n]$ of $[1423]$.
\end{proof}

We now have another nontrivial Wilf equivalence with Theorem~\ref{1234,1243,1324}.

\begin{thm}
\label{1243,1324,1342}
We have
$$
\{[1243],[1324],[1342]\} \equiv \{[1243],[1342],[1423]\}. 
$$
And for $n\ge4$
$$
\#\Av_n([1243],[1324],[1342])=3.
$$
\end{thm}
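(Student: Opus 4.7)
The plan is to leverage Theorem~\ref{1243,1324}, which classifies $\Av_n([1324],[1342])$ as the permutations of the form $\si = 132\spn{\io_j, \de_k, \io_l}$ with $j, k \ge 1$ and $l \ge 0$.  Explicitly,
$$
\si = 1, 2, \ldots, j, \; n, n-1, \ldots, j+l+1, \; j+1, j+2, \ldots, j+l.
$$
Since $\Av_n([1243],[1324],[1342]) \sbe \Av_n([1324],[1342])$, the task reduces to determining which parameter triples $(j, k, l)$ additionally yield $[\si]$ avoiding $[1243]$.

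Next I would rule out most triples via three obstructions.  First, if $j \ge 2$ and $k \ge 2$, then the linear subsequence $1, 2, n, n-1$ of $\si$ is a copy of $1243$.  Second, if $j \ge 2$, $k = 1$, and $l \ge 1$, then the linear subsequence $1, 2, n, j+1$ of $\si$ is a copy of $1243$ (note $j+1 < n$).  Third, if $j = 1$, $k \ge 2$, and $l \ge 2$, then in the rotation of $\si$ that begins with the $\io_l$ block, namely $2, 3, \ldots, l+1, 1, n, n-1, \ldots, l+2$, the subsequence $2, 3, n, n-1$ is a linear copy of $1243$; this obstruction is genuinely cyclic and not visible in $\si$ itself.

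Eliminating all triples covered above, and accounting for the parameter redundancy $(j, k, 0) = (j, k-1, 1)$ which identifies these parameterizations when $l \le 1$, exactly three cyclic permutations survive: $[\io_n]$ from $j = n-1, k = 1, l = 0$; $[\de_n]$ from $j = 1$ with $l \le 1$; and $[1, n, 2, 3, \ldots, n-1]$ from $j = 1, k = 1, l = n-2$.  I would conclude by checking directly that each of these three avoids all three forbidden patterns: $[\io_n]$ and $[\de_n]$ contain only $[\io_k]$ and $[\de_k]$ cyclically, respectively, while a brief inspection shows that the only cyclic $4$-patterns of $[1, n, 2, 3, \ldots, n-1]$ are $[1234]$ and $[1423]$.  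The main obstacle is spotting the cyclic-but-not-linear $[1243]$ in the third obstruction and handling the parameter redundancy cleanly so as to neither double-count $[\de_n]$ nor overlook the degenerate triple yielding $[\io_n]$.
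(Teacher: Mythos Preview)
Your argument is correct, but it takes a substantially longer route than the paper's.  You start from the characterization of $\Av_n([1324],[1342])$ in Theorem~\ref{1243,1324}, which has $1+\binom{n-1}{2}$ elements parameterized by $(j,k,l)$, and then carve out those that avoid $[1243]$ via three separate obstructions together with the redundancy $(j,k,0)\leftrightarrow(j,k-1,1)$.  The paper instead intersects in the other order: it uses Theorem~\ref{1243,1342}, which already shows $\#\Av_n([1243],[1342])=4$ with the four permutations given explicitly by equation~\eqref{sixy}, and then simply checks which of those four also avoid $[1324]$.  Exactly one of them, the case $x=n$, $y=1$, $\rho$ increasing, contains the copy $[n,1,3,2]$ of $[1324]$, and the remaining three survive.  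So the paper's proof is essentially a two-line inspection of four permutations, whereas yours requires a genuine case analysis over an unbounded family.  Both are valid; the paper's choice of which pair to start from makes the argument much shorter.
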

\begin{proof}
By Theorem~\ref{1243,1342}, we just need to show that exactly $3$ of the $4$ permutations $[\si]$ avoiding $\{[1243],[1342]\}$  also avoid $[1324]$.  These permutations are described in equation~\eqref{sixy}.  If $x=n$ and $y=1$ then $[\si]$ contains the copy $[n132]$ of this pattern.  It is also easy to check that the other three avoid it.
\end{proof}

We now have our last nontrivial Wilf equivalence for triples.
\begin{thm}
\label{1243,1324,1423}
We have
$$
\{[1243],[1324],[1423]\} \equiv \{[1324],[1342],[1423]\}. 
$$
And for $n\ge2$
$$
\#\Av_n([1324],[1342],[1423])=n-1.
$$
\end{thm}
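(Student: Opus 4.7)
The plan is to leverage the characterization of $\Av_n([1324],[1342])$ given in the proof of Theorem~\ref{1243,1324} and intersect it with $[1423]$-avoidance. The stated Wilf equivalence is trivial by reversal: one checks $[1243]^r=[1342]$, $[1324]^r=[1423]$, and $[1423]^r=[1324]$, so the two triples of patterns are exchanged by $r$, and the equivalence follows from~\eqref{cW=}. Hence only the enumeration requires work.

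From Theorem~\ref{1243,1324}, every $[\sigma]\in\Av_n([1324],[1342])$ has a representative
$$\sigma=132\spn{\io_j,\de_k,\io_l}=1,2,\ldots,j,\;n,n-1,\ldots,n-k+1,\;j+1,j+2,\ldots,n-k$$
with $j,k\ge 1$, $l\ge 0$, and $j+k+l=n$. The first step is to show that additionally avoiding $[1423]$ forces $l\le 1$. When $l\ge 2$, I claim the four elements $a=1$, $b=j+1$, $c=j+2$, $d=n$ give a forbidden copy: reading forward in the cycle from $a$, one passes through $\io_j$ (picking up none of $b,c,d$), enters $\de_k$ at $d=n$ and proceeds through $\de_k$ (still none), and only then reaches $\io_l$, where $b$ and $c$ appear in that order. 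So the cyclic order of the four elements is $a,d,b,c$, which realizes $[1423]$.

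The second step is to note that the surviving $l\le 1$ permutations collapse to the single-parameter family $\sigma=\io_j\,\de_{n-j}$ for $1\le j\le n-1$, since the lone $\io_l$-element (when $l=1$) equals $j+1$ and may be merged into $\de_k$. I would then verify that every such $\sigma$ avoids all three forbidden patterns by a case split on the location of the smallest $a$ among four chosen values $a<b<c<d$. If $a\le j$, the other elements are encountered going forward from $a$ first in an increasing subsequence (within $\io_j$) and then in a decreasing subsequence (within $\de_{n-j}$), so their cyclic order is one of $1234$, $1243$, or $1432$. If $a>j$, then $b,c,d$ all lie above $a$ inside $\de_{n-j}$, and after wrapping through $\io_j$ they are re-encountered in decreasing order, giving pattern $1432$.

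Finally, distinct values of $j$ yield distinct cyclic permutations (for instance $\cdes[\io_j\de_{n-j}]=n-j$ separates them), so the avoidance class has exactly $n-1$ elements. The main technical care is in Step~2: the wrap-around case $a>j$ is where an ordering slip is most likely, since one has to trace the cycle through the tail of $\de_{n-j}$, across $\io_j$, and back into the head of $\de_{n-j}$, and it takes a moment to see why the three larger elements are forced into the decreasing order producing $1432$ rather than any of the forbidden patterns.
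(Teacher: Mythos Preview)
Your proof is correct. Both you and the paper start from the characterization $\sigma=132\spn{\io_j,\de_k,\io_l}$ obtained in the proof of Theorem~\ref{1243,1324}, and both arrive at the same one-parameter family $[\io_j\de_{n-j}]$ for $1\le j\le n-1$; the difference lies in how the extra $[1423]$-avoidance constraint is imposed.

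The paper takes a shortcut: it intersects the characterization of $\Av_n([1324],[1342])$ from Theorem~\ref{1243,1324} with the independent characterization of $\Av_n([1324],[1423])$ from Theorem~\ref{1324,1423} (where each of $n-1,n-2,\ldots$ is placed at one end of the current word), reading off directly that the intersection consists of the permutations $21\spn{\de_k,\io_{n-k}}$, which are cyclic rotations of your $\io_j\de_{n-j}$. This avoids any fresh pattern-chasing but depends on having Theorem~\ref{1324,1423} already in hand.

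You instead work from scratch on the $[1423]$ condition: you exhibit an explicit copy of $[1423]$ whenever $l\ge 2$, and then verify by a clean case analysis (exploiting that every value in $\io_j$ is smaller than every value in $\de_{n-j}$) that each $[\io_j\de_{n-j}]$ avoids all three patterns. Your argument is more self-contained and makes the verification explicit, at the cost of a few extra lines; the paper's is terser but leans on an additional prior result. Your observation that $\cdes[\io_j\de_{n-j}]=n-j$ distinguishes the $n-1$ classes is a nice touch that the paper handles differently (by noting that two boundary parameter values coincide).
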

\begin{proof}
Comparing the description of $\Av_n([1324],[1342])$ in
the proof of Theorem~\ref{1243,1324} and that of $\Av_n([1324],[1423])$ in the proof of Theorem~\ref{1324,1423}, we see that any 
$[\si]\in\Av_n([1324],[1342],[1423])$ can be put in the form
$$
\si=21[\de_k,\io_{n-k}]
$$
with $k\ge1$.  Also, $k=n-1$ and $k=n$ yield the same permutation.  So there are $n-1$ choices for $k$ and we are done.
\end{proof}

When $\#[\Pi]\ge4$ where $[\Pi]\sbs[\fS_4]$, the size of $\Av_n[\Pi]$ becomes constant for $n\ge5$.  And this size is trivial to calculate for $n\le 4$.  Furthermore, the description of the surviving permutations for large $n$ is easy to obtain given our previous proofs.  So we content ourselves with a listing of the equivalence classes and associated constants in Table~\ref{[Pi]big}.  Classes are separated by double horizontal line.  As usual, we do not consider classes containing both the increasing and decreasing permutations because of the cyclic Erd\H{o}s-Szekeres Theorem.

\begin{table}
    $$
    \begin{array}{l|c}
       [\Pi]  & \#\Av_n[\Pi]  \\
       \hline \hline
        \{[1234], [1243], [1324], [1342]\} & 1\\
        \{[1243], [1342], [1423], [1432]\} &\\
        \hline \hline
        \{[1234], [1243], [1324], [1423]\}& 2\\
        \{[1234], [1243], [1342], [1423]\}&\\
        \{[1234], [1324], [1342], [1423]\}&\\
        \{[1243], [1324], [1342], [1423]\}&\\
        \{[1243], [1324], [1342], [1432]\}&\\
        \{[1243], [1324], [1423], [1432]\}&\\
        \{[1324], [1342], [1423], [1432]\}&\\
        \hline\hline
        \{[1234], [1243], [1324], [1342], [1423]\}&1\\
        \{[1243], [1324], [1342], [1423], [1432]\}&\\
        \hline\hline
    \end{array}
    $$
    \caption{Wilf equivalence classes and cardinalities of  $\Av_n[\Pi]$ for certain $[\Pi]$ and $n\ge5$}
    \label{[Pi]big}
\end{table}

\section{Cyclic descent generating functions}
\label{cdg}

We will now consider the generating function for the number of cyclic descents over various avoidance classes $[\Pi]\sbs[\fS_4]$, starting with those defined by a single element.  We will sometimes use the characterizations given by Callan~\cite{cal:pac} for these classes to facilitate our work, and use the abbreviation
$$
D_n([\Pi])=D_n([\Pi];q) = \sum_{\si\in\Av_n[\Pi]} q^{\cdes\si}
$$
for the generating function.

To begin, we have  a lemma showing that trivial Wilf equivalences also give simple relationships between the corresponding generating functions.
\begin{lem}
\label{rcgf}
For any $[\Pi]$ We have
$$
D_n([\Pi]^r;q)=D_n([\Pi]^c;q)= q^n D_n([\pi];1/q)
$$
and
$$
D_n([\Pi]^{rc};q)=D_n([\Pi];q).
$$
\end{lem}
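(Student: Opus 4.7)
The plan is to prove each of the three identities by exhibiting a natural bijection between the relevant avoidance classes that tracks the $\cdes$ statistic in a controlled way. The three bijections in question are simply $\si\mapsto\si^r$, $\si\mapsto\si^c$, and $\si\mapsto\si^{rc}$, restricted to $\Av_n[\Pi]$. First I would verify that each of these indeed maps $\Av_n[\Pi]$ bijectively onto $\Av_n[\Pi^r]$, $\Av_n[\Pi^c]$, and $\Av_n[\Pi^{rc}]$ respectively. This reduces to the observation that reversing (respectively, complementing) each rotation of $\si$ yields precisely the rotations of $\si^r$ (respectively, $\si^c$), so cyclic pattern containment is preserved with the corresponding twist on the pattern set.

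Next I would analyze how each operation affects $\cdes$. The key underlying fact is that since all entries are distinct, each of the $n$ cyclic adjacencies $(\si_i,\si_{i+1})$ with subscripts modulo $n$ is either a cyclic descent or a cyclic ascent, so cyclic descents and ascents together partition the $n$ positions. Writing $\tau=\si^r$ with $\tau_j=\si_{n+1-j}$, a short check on each cyclic position (including the wrap-around pair) shows that a cyclic descent of $\tau$ at position $j$ corresponds to a cyclic ascent of $\si$ at position $n-j$ modulo $n$, yielding $\cdes\si^r=n-\cdes\si$. Similarly, $\tau=\si^c$ with $\tau_i=n+1-\si_i$ satisfies $\tau_i>\tau_{i+1}\iff\si_i<\si_{i+1}$ at every cyclic position, giving $\cdes\si^c=n-\cdes\si$. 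Composing these two, $\cdes\si^{rc}=\cdes\si$.

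Finally I would translate these statistic identities into generating function identities by reindexing the sum over the bijection. For instance,
$$D_n([\Pi]^r;q)=\sum_{\tau\in\Av_n[\Pi^r]}q^{\cdes\tau}=\sum_{\si\in\Av_n[\Pi]}q^{n-\cdes\si}=q^n D_n([\Pi];1/q),$$
and the complement case is identical in form. The reverse-complement identity follows directly from $\cdes\si^{rc}=\cdes\si$, or equivalently by chaining the reverse and complement identities: $D_n([\Pi]^{rc};q)=q^n D_n([\Pi]^c;1/q)=q^n(1/q)^n D_n([\Pi];q)=D_n([\Pi];q)$.

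I do not anticipate a real obstacle here; the only fiddly point is verifying the ascent-descent swap at the wrap-around pair $(\si_n,\si_1)$ under reversal, where one must be slightly careful with index arithmetic modulo $n$ to confirm that this last position also contributes its swap to the count.
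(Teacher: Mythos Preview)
Your proposal is correct and follows essentially the same approach as the paper: observe that reversal and complementation each turn cyclic descents into cyclic ascents (so $\cdes$ becomes $n-\cdes$), translate this into the generating function identity, and deduce the reverse-complement case by composition. The paper's proof is simply a two-sentence compression of exactly what you have spelled out.
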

\begin{proof}
Reversing or complementing a permutation turns all cyclic descents into cyclic ascents and vice-versa.  Translating this into generating functions gives the first displayed equalities.  And the second displayed equation follows from the the previous display. 
\end{proof}

Now consider the possible $D_n([\pi])$ for $[\pi]\in[\fS_4]$.  We begin with the simplest case.

\begin{thm}
We have $D_n([1423];q)=q^n D_n([1324];1/q)$ where, for $n\ge2$,
$$
D_n([1324];q)= \sum_{k=1}^{n-1}\binom{n+k-3}{n-k-1} q^k.
$$
\end{thm}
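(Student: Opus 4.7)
The $[1423]$ identity follows at once from Lemma~\ref{rcgf}. A quick check shows $[1324]^r=[1423]$: the reverse of the word $1324$ is $4231$, whose cyclic rotations are $\{4231,2314,3142,1423\}$, which is the class $[1423]$. The first equation of the lemma then yields $D_n([1423];q)=q^n D_n([1324];1/q)$.

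For the formula $D_n([1324];q)=\sum_{k=1}^{n-1}\binom{n+k-3}{n-k-1}q^k$, my plan is to prove by induction the recurrence
\[
D_n(q)=(2+q)\,D_{n-1}(q)-D_{n-2}(q)\qquad(n\ge 4),
\]
with base cases $D_2(q)=q$ and $D_3(q)=q+q^2$, both verified by direct inspection of the two small avoidance classes. Granted this recurrence, checking that the stated closed form satisfies it is a routine identity (a double application of Pascal's rule on the binomial coefficient), so the substance of the proof reduces to establishing the recurrence.

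To this end, I would work inside the generating tree $T([1324])$ from Section~\ref{pad} and track $\cdes$ through the insertion of $n+1$. Two facts drive the argument. First, inserting $n+1$ at site $i$ of $[\si]\in\Av_n[1324]$ preserves $\cdes$ when site $i$ is a cyclic descent of $\si$ and raises $\cdes$ by exactly one when $i$ is a cyclic ascent; this is a direct check of the three cyclic positions adjacent to the inserted element. Second, site $i$ is active iff the linear rotation of $\si$ beginning at $\si_{i+1}$ avoids the pattern $132$, because any new cyclic $[1324]$ in the child must use $n+1$ as its largest element, forcing a linear $132$ among the three entries immediately following $n+1$ in the cycle. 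Combining these with the structural claim, verified empirically for small $n$, that each $[\si]\in\Av_n[1324]$ has exactly one active descent site and $\deg[\si]-1$ active ascent sites, summing the child contributions together with an auxiliary identity expressing $\sum_{[\si]}q^{\cdes\si}\deg[\si]$ in terms of $D_{n-1}$ and $D_{n-2}$ should deliver the two-term recurrence.

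The principal obstacle is proving the structural claim about the unique active descent site: identifying it canonically and showing no other descent site is active requires a careful case analysis of the $132$-avoiding rotations of $\si$. Should this direct approach prove intractable, my fallback would be a bijection between $\Av_n[1324]$ with $\cdes=k$ and the tilings of a $1\times(2n-4)$ strip by squares and dominoes using exactly $n-1-k$ dominoes (a set of cardinality $\binom{n+k-3}{n-k-1}$), matching the tile statistic $n-1-d$ with $\cdes$ to recover the formula directly.
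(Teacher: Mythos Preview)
Your handling of the $[1423]$ identity via Lemma~\ref{rcgf} is correct and matches the paper. The two-term recurrence $D_n=(2+q)D_{n-1}-D_{n-2}$ is also correct, and your ``first fact'' about how $\cdes$ changes under insertion is fine.

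The structural claim you flag as the principal obstacle is actually easy, and follows from your own ``second fact.'' If site $i$ is a descent with $\si_i\ne n$, then in the rotation $\si_{i+1}\si_{i+2}\cdots\si_i$ the element $n$ appears strictly between $\si_{i+1}$ and $\si_i$, and since $\si_{i+1}<\si_i<n$ the triple $\si_{i+1},n,\si_i$ is a linear $132$. Hence every descent site other than the one immediately after $n$ is inactive, and the unique active descent site is the one after $n$.

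The genuine gap is the unspecified ``auxiliary identity.'' Granting the structural claim gives only $D_{n+1}=D_n+q\,b_n$ where $b_n=\sum_{[\si]}q^{\cdes\si}(\deg\si-1)$. To reach $D_{n+1}=(2+q)D_n-D_{n-1}$ you need $b_n=b_{n-1}+D_n$, which amounts to understanding how degrees propagate in the tree (in fact one needs the production rule $(d)\to(d+1)(2)^{d-1}$ together with the statement that the degree-$(d+1)$ child is always the descent-site child). You have not stated this, let alone proved it, and the phrase ``expressing $\sum q^{\cdes\si}\deg\si$ in terms of $D_{n-1}$ and $D_{n-2}$'' is circular as written: the obvious expression for that sum involves $D_{n+1}$.

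The paper's route avoids all of this. Using Callan's characterization, it decomposes $\Av_n[1324]$ by the position $k$ of $n-1$ (with $\si_n=n$): the case $k=n-1$ bijects onto $\Av_{n-1}[1324]$ preserving $\cdes$, while each case $k\le n-2$ forces $\si=2314\langle\io_{k-1},1,\tau,1\rangle$ and bijects onto $\Av_{n-k}[1324]$ with $\cdes$ increased by one. This yields $D_n=D_{n-1}+q\sum_{j=2}^{n-1}D_j$ directly, of which your two-term recurrence is the telescoped form.
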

\begin{proof}
We use Callan's characterization of this avoidance class to obtain a recursion for $D_n([1324])$.  If $[\si]\in\Av_n([1324])$ and $n\ge 3$ then write $\si=\si_1\si_2\ldots\si_{n-1}n$.  Let $k$ be the index such that $\si_k=n-1$.  There are two cases.

If $k=n-1$ then $\si=\tau,n-1,n$ where $[\tau,n-1]\in\Av_{n-1}([1324])$ and this is a bijection.  Since $\cdes[\si]=\cdes[\tau,n-1]$, this case contributes $D_{n-1}([1324])$ to the recursion.

If $1\le k\le n-2$ then this forces
$$
\si=2314[\io_{k-1},1,\tau,1]
$$
for some $\tau$ such that  $[\tau n]$ avoids $[1324]$.  Because of the extra descent caused by $n-1$ we have $\cdes[\si]=1+\cdes[\tau n]$.  So this case gives a contribution of $\sum_{k=1}^{n-2} q D_{n-k}([1324])$.

Putting everything together, we have
$$
D_n([1324])= D_{n-1}([1324]) + \sum_{k=1}^{n-2} q D_{n-k}([1324]).
$$
for $n\ge3$ and $D_2([1324])=q$.   It is now a simple manner of manipulating binomial coefficients to show that the formula given in the theorem satisfies this initial value problem.
\end{proof}

For the next case, we will use a characterization of the class different from the one found by Callan.  This will permit us to avoid the use of a recurrence.
\begin{lem}
\label{1342}
Suppose $[\si]\in[\fS_n]$ and write $\si=1\rho n\tau$.  We
have $[\si]\in\Av_n([1342])$ if and only if the following three conditions are satsified:
\begin{enumerate}
    \item[(a)] $\rho$ and $\tau$ both avoid $\{213,231\}$,
    \item[(b)] $\max\rho<\min\tau$,
    \item[(c)] there is not both a descent in $\rho$ and an ascent in $\tau$.
\end{enumerate}
\end{lem}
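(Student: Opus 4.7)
The plan is to prove both implications via contrapositive, exploiting the fact that in the representative $\si=1\rho n\tau$ the element $1$ is the global minimum, $n$ is the global maximum, and $1,\rho,n,\tau$ appear in cyclic order.

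For the necessity direction (if $[\si]\in\Av_n([1342])$ then (a), (b), and (c) all hold), I assume one of the three conditions fails and exhibit an explicit copy of a rotation of $1342$. If (b) fails, so that some $x\in\rho$ and $y\in\tau$ satisfy $x>y$, the cyclic subsequence $1,x,n,y$ has values of pattern $1342$. If (a) fails, I combine the offending $213$ or $231$ pattern in $\rho$ (respectively $\tau$) with $n$ or $1$ as a fourth element to produce one of $2134$, $1342$, $4213$, $3421$, all rotations of $1342$. Finally, if (c) fails and (b) holds, then a descent $r_1,r_2$ of $\rho$ together with an ascent $t_1,t_2$ of $\tau$ gives the cyclic subsequence $r_1,r_2,t_1,t_2$ of pattern $2134$; if both (c) and (b) fail, the first bullet already supplies a copy.

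For the sufficiency direction (if all of (a), (b), and (c) hold then $[\si]$ avoids $[1342]$), I again argue by contrapositive. Assume $[\si]$ has a cyclic copy $a_1,a_2,a_3,a_4$ with $a_1<a_4<a_2<a_3$ and split into cases based on whether $1$ and $n$ lie among the $a_i$. When $a_1=1$ and $a_3=n$, we have $a_2\in\rho$ and $a_4\in\tau$, so $a_4<a_2$ places a $\rho$-element above a $\tau$-element, violating (b). When exactly one of $\{1,n\}$ belongs to the pattern, the three remaining $a_i$ lie consecutively in $\rho\tau$ (or in $\tau\rho$ when $n=a_3$), and a short subcase check by the number of them in $\rho$ versus $\tau$ shows that either all three live on one side and produce a $231$ or $213$ (violating (a)) or the split forces a $\rho$-element above a $\tau$-element (violating (b)). When neither $1$ nor $n$ is in the copy, assuming (b) holds forces the $\rho$-elements to occupy the smallest ranks in $\{a_1,a_2,a_3,a_4\}$; locating where ranks $1,\ldots,4$ appear in the pattern $1342$ then fixes the $\rho/\tau$-distribution, and each resulting arrangement produces either a $231$ or $213$ inside $\rho$ or $\tau$ (violating (a)) or a simultaneous descent in $\rho$ and ascent in $\tau$ (violating (c)).

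The delicate step is the neither-$1$-nor-$n$ subcase with two elements on each side: the cyclic order forces $a_4$ to precede $a_1$ linearly in $\rho$ (after the cycle wraps past $n$, $\tau$, and $1$) while $a_4>a_1$ in value, creating precisely the descent in $\rho$ that, together with the ascent $a_2,a_3$ in $\tau$, violates (c). Correctly tracking this cyclic wraparound is the main technical obstacle; the remaining subcases reduce to routine rank arguments on triples.
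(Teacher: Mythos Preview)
Your proposal is correct and follows the same strategy as the paper: for necessity you exhibit an explicit rotation of $1342$ whenever one of (a), (b), (c) fails, and for sufficiency you assume a cyclic copy exists and do a case analysis on how it intersects $1$, $\rho$, $n$, and $\tau$. The paper carries out the necessity direction in essentially identical fashion and then explicitly leaves the converse ``to the reader,'' so your detailed case split for sufficiency---especially the two-and-two subcase where the wraparound forces $a_4,a_1$ to be a descent in $\rho$ paired with the ascent $a_2,a_3$ in $\tau$---is more thorough than what the paper writes out, but not a different argument.
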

\begin{proof}
For the forward direction, suppose $[\si]\in\Av_n([1342])$.  Condition (a) is true since if either $\rho$ or $\tau$ contains $213$ then, together with $n$, we have that $[\si]$ contains $[2134]$.  Similarly, if either contains $231$ then $[\si]$ contains the forbidden pattern by prepending the $1$.  As far as (b), if there is $y>x$ with $y\in\rho$ and $x\in\tau$ then $[1ynx]$ is a copy of $[1342]$.  Finally for (c), if there were a descent in $\rho$ and an ascent in $\tau$ then, because of (b), putting them together would again give a copy of the pattern to avoid.

The converse is similar where one assumes that a copy of $[1342]$ exists and then considers all the different intersections it could have with $1$, $\rho$, $n$, and $\tau$.  We leave the details to the reader.
\end{proof}

In order to use this lemma, we will need a result about the ordinary descent statistic on linear permutations avoiding $\{123,231\}$.  The  next result is a specialization of Proposition 5.2 of the paper of Dokos, Dwyer, Johnson, Sagan, and Selsor~\cite{DDJSS:pps} and so the proof is ommited. 
\begin{lem}[\cite{DDJSS:pps}]
\label{DDJSS}
We have

\vs{5pt}

\eqed{
\sum_{\si\in\Av_n(213,231)} q^{\des\si} = (1+q)^{n-1}.
}
\end{lem}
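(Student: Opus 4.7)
The plan is to establish a recursive structure on $\Av_n(213, 231)$, translate it into a recursion for $a_n(q) := \sum_{\si \in \Av_n(213, 231)} q^{\des \si}$, and solve that recursion by induction.

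First I would locate the maximum entry: say $\si_j = n$ in some $\si \in \Av_n(213, 231)$. Any pair $a < b < j$ with $\si_a > \si_b$ would produce $\si_a, \si_b, n \iso 213$, so the prefix $\si_1 \ldots \si_{j-1}$ must be increasing. Likewise, any $a < j < c$ with $\si_c < \si_a$ would give $\si_a, n, \si_c \iso 231$, so every entry before $n$ is smaller than every entry after $n$. Together these force $\si_1 \ldots \si_{j-1} = 1, 2, \ldots, j-1$, while $\si_{j+1} \ldots \si_n$ can be any permutation of $\{j, \ldots, n-1\}$ that avoids both $213$ and $231$; conversely, an easy check shows every such $\si$ lies in the avoidance class.

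Using this characterization I can tabulate descents. When $j = n$, $\si = \io_n$, contributing $1$ with no descents. When $1 \le j \le n-1$, the increasing prefix contributes no descents, there is exactly one forced descent at position $j$ (the drop from $n$ to $\si_{j+1}$), and the suffix is order isomorphic to an arbitrary element of $\Av_{n-j}(213, 231)$, contributing $a_{n-j}(q)$. Setting $k = n - j$ yields
\[
a_n(q) = 1 + q \sum_{k=1}^{n-1} a_k(q).
\]

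Finally I would prove $a_n(q) = (1+q)^{n-1}$ by induction on $n$: the base case $n = 1$ is $a_1 = 1$, and for the inductive step the geometric sum $q \sum_{k=1}^{n-1} (1+q)^{k-1} = (1+q)^{n-1} - 1$ gives $a_n(q) = (1+q)^{n-1}$. The only real work is the structural step; the recursion and its closed-form solution are then routine.
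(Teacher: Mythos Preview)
Your argument is correct. The structural characterization via the position of $n$ is sound: avoidance of $213$ forces the prefix to be increasing, avoidance of $231$ forces every prefix entry below every suffix entry, and a brief case check (which you mention as ``an easy check'') confirms that no forbidden pattern can straddle the prefix, $n$, and suffix. The descent bookkeeping and the geometric-sum induction are both fine.

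The paper itself gives no proof of this lemma; it simply cites Proposition~5.2 of Dokos--Dwyer--Johnson--Sagan--Selsor and omits the argument. So there is nothing in the present paper to compare against directly. For what it is worth, a slightly slicker route than yours is to locate $\si_1$ rather than $n$: if $1<\si_1<n$ then any smaller value and any larger value to its right produce a $213$ or a $231$, so $\si_1\in\{1,n\}$, and conversely this choice never creates a forbidden pattern with the rest. The choice $\si_1=n$ contributes a descent at position $1$ while $\si_1=1$ does not, yielding the one-line recursion $a_n(q)=(1+q)\,a_{n-1}(q)$ and hence $(1+q)^{n-1}$ immediately. Your decomposition by the maximum gives the same answer with one extra summation step; both approaches are standard and either would serve.
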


We need one last well-known definition.  Call a polynomial $f(q)=\sum_{k=0}^n a_k q^k$ of degree $n$ {\em symmetric} if $a_k=a_{n-k}$ for all $0\le k\le n$.  Note that $f(q)$ of degree $n$ is symmetric if and only if
\begin{equation}
\label{sym}
  q^n f(1/q) = f(q).  
\end{equation}
\begin{thm}
We have $D_n([1243];q)=D_n([1342];q)$ where, for $n\ge2$,
$$
D_n([1342];q) = 2q(1+q)^{n-2}- q\cdot\frac{1-q^{n-1}}{1-q}
$$
is symmetric.
\end{thm}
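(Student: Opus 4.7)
The plan is as follows. The equality $D_n([1243];q)=D_n([1342];q)$ will follow immediately from a trivial cyclic Wilf equivalence: applying the reverse-complement to the linear representative $1342$ yields $3124$, which is a rotation of $1243$, so $[1342]^{rc}=[1243]$ as cyclic patterns. The second identity of Lemma~\ref{rcgf} then delivers the generating-function equality.

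For the closed form, I will exploit the characterization in Lemma~\ref{1342}. Given $[\si]\in\Av_n([1342])$, write $\si=1\rho n\tau$ with $k=|\rho|$ and $l=|\tau|$, so $k+l=n-2$. Condition~(b) forces $\rho$ to permute $\{2,\ldots,k+1\}$ and $\tau$ to permute $\{k+2,\ldots,n-1\}$. Tracing the cyclic descents around $\si$, the transitions $1\to\rho_1$ and $\rho_k\to n$ are ascents while $n\to\tau_1$ and $\tau_l\to 1$ are descents, yielding
\[
\cdes[\si]=\des\rho+\des\tau+2 \quad\text{if } \tau\ne\emp, \qquad \cdes[\si]=\des\rho+1 \quad\text{if } \tau=\emp.
\]

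To enumerate valid pairs $(\rho,\tau)$ weighted by $q^{\des\rho+\des\tau}$, I will handle condition~(c) by inclusion--exclusion: its negation is "descent in $\rho$ \emph{and} ascent in $\tau$", so (c) is equivalent to the disjunction "$\rho$ is increasing \emph{or} $\tau$ is decreasing". Lemma~\ref{DDJSS} then supplies the generating functions for the two single-constraint cases (and for their intersection, where both $\rho$ and $\tau$ are forced). After multiplying by the boundary factor ($q^2$ when $l\ge 1$, $q$ when $l=0$) and summing over all splits $k+l=n-2$, the total collapses via geometric-series manipulations, most notably the identity $\sum_{j=0}^{m}q^j(1+q)^{m-j}=(1+q)^{m+1}-q^{m+1}$, to the claimed $2q(1+q)^{n-2}-q\cdot\frac{1-q^{n-1}}{1-q}$. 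The main care needed will be bookkeeping in the boundary cases $k=0$ and $l=0$, where one of $\rho,\tau$ is empty and condition~(c) is vacuous.

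Finally, symmetry will be immediate from the closed form: extracting coefficients gives $[q^k]D_n([1342];q)=2\binom{n-2}{k-1}-1$ for $1\le k\le n-1$ and $0$ for $k\in\{0,n\}$, and the identity $\binom{n-2}{k-1}=\binom{n-2}{n-k-1}$ shows $a_k=a_{n-k}$, equivalently $q^n D_n([1342];1/q)=D_n([1342];q)$.
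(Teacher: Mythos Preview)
Your proposal is correct and follows essentially the same approach as the paper: both use the characterization of Lemma~\ref{1342}, split according to whether $\rho$ is increasing or $\tau$ is decreasing (inclusion--exclusion on condition~(c)), apply Lemma~\ref{DDJSS} to each piece, and sum the geometric series. One small streamlining in your version is worth noting: you obtain $D_n([1243];q)=D_n([1342];q)$ directly from $[1342]^{rc}=[1243]$ and the second identity of Lemma~\ref{rcgf}, whereas the paper deduces this equality only after first establishing the symmetry of the explicit polynomial; your route is slightly cleaner but the underlying idea is the same trivial Wilf equivalence.
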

\begin{proof}
It is easy to prove from the explicit form of $D_n([1342])$ that it satisfies equation~\eqref{sym} and so is symmetric.  So once this is proved, the equality of the two generating functions follows from Lemma~\ref{rcgf}.

We adopt the notation of Lemma~\ref{1342} and let $\si_k=n$ where $2\le k\le n$.  We will consider cases depending on whether $\rho$ or $\tau$ is empty.  If $\rho=\emp$ then by Lemma~\ref{1342} (a) and Lemma~\ref{DDJSS} we have that the generating function for the possible linear $\tau$ is $(1+q)^{n-3}$.  Also, $\cdes[\si]=2+\des\tau$ by the form of $\si$, so the contribution of such $[\si]$ to $D_n([1342])$ is $q^2(1+q)^{n-3}$.  In an analogous way, we see that those $[\si]$ with $\tau=\emp$ yield $q(1+q)^{n-3}$.  Adding these, we have a total of $q(1+q)^{n-2}$ so far.

We now assume that $\rho,\tau$ are both nonempty so that 
$3\le k\le n-1$.  By parts (b) and (c) of Lemma~\ref{1342}, either $\rho$ must be an increasing subsequence of consecutive integers or $\tau$ must be a decreasing one.  Using Lemma~\ref{DDJSS} again, we see that in the first subcase a contribution of $q^2(1+q)^{n-k-1}$ is obtained.  And in the second, taking into account the descents in $\rho$, the contribution is $q^{n-k+1}(1+q)^{k-3}$.  However, these two subcases overlap when $\rho$ is increasing and $\tau$ is decreasing.  So we must subtract $q^{n-k+1}$.

Thus we get a grand total of
$$
D_n([1342]) = q(1+q)^{n-2} + \sum_{k=3}^{n-1}[q^2(1+q)^{n-k-1}+q^{n-k+1}(1+q)^{k-3}-q^{n-k+1}].
$$
Summing the geometric series and simplifying completes the proof.
\end{proof}

For the avoidance class of the increasing (or decreasing) pattern in $[\fS_4]$, we will need another concept.  Given sequences $\rho$ and $\tau$ of distinct integers, their {\em shuffle set} is
$$
\rho\shu\tau =\{\si\ :\ 
\text{$|\si|=|\rho|+|\tau|$ and both $\rho,\tau$ are subsequences of $\si$}\}.
$$
For example, 
$$
12\shu 34 =\{1234, 1324, 1342, 3124, 3142, 3412\}.
$$
In the statement of the next result we make the usual convention that $\binom{n}{k}=0$ if $k>n$.
\begin{thm}
We have $D_n([1234];q)= q^n D_n([1432];1/q)$ where, for $n\ge2$,
$$
D_n([1432];q) = q+(2^{n-1}-n) q^2 +\sum_{j\ge3}\binom{n}{2j-1}q^j.
$$
\end{thm}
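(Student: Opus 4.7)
The first equation $D_n([1234];q)=q^n D_n([1432];1/q)$ is immediate from Lemma~\ref{rcgf}, since the linear complement of $1432$ is $4123$ whose cyclic class equals $[1234]$; thus $[1234]=[1432]^c$. The substantive task is to prove the closed form for $D_n([1432];q)$.

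My plan is to choose a canonical representative of each element of $\Av_n([1432])$ by rotating so that $1$ sits in the leading slot, writing $\si=1\tau$ with $\tau$ a linear permutation of $\{2,3,\ldots,n\}$ of length $n-1$. I first claim $[\si]\in\Av_n([1432])$ if and only if $\tau$ avoids the pair of linear patterns $\{321,2143\}$. The proof is a case split on the four elements realizing a hypothetical copy of $[1432]$, which as a cyclic pattern coincides with $[\de_4]=[4321]$: if $1$ is among the four it must play the role of the minimum and forces $\tau$ to contain a decreasing subsequence of length three (i.e.\ $321$); if $1$ is not among them, the four elements form a cyclic copy of $[\de_4]$ inside $[\tau]$, equivalently $\tau$ linearly contains one of the rotations $4321,3214,2143,1432$ of $\de_4$. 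Since $4321$, $3214$, and $1432$ all contain $321$, the combined avoidance condition simplifies to avoiding $\{321,2143\}$ in $\tau$.

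Next, because only the wrap pair $(\tau_{n-1},1)$ contributes an extra cyclic descent beyond the internal descents of $\tau$, one has $\cdes[\si]=\des(\tau)+1$. Thus
\[
D_n([1432];q)\;=\;q\cdot\sum_{\tau\in\Av_{n-1}(\{321,2143\})} q^{\des(\tau)}.
\]
I will split the enumeration on the right by $i=\des(\tau)$. For $i=0$ only $\tau=\io_{n-1}$ qualifies, contributing the $q$-term. For $i=1$, every one-descent permutation avoids both $321$ and $2143$ because each forbidden pattern requires two disjoint descent-intervals, so the coefficient is the Eulerian count $2^{n-1}-n$ of one-descent permutations of $[n-1]$. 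For $i\ge 2$ the claim is that the count is $\binom{n}{2i+1}$, which supplies the remaining terms $\binom{n}{2j-1}q^j$ with $j=i+1\ge 3$.

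The principal obstacle is the $i\ge 2$ enumeration. My intended method is to imitate the generating-tree strategy of Theorems~\ref{1234,1243} and~\ref{1234,1324}: build $T(\{321,2143\})$ with nodes labelled by both the current descent number and a finite set of site-types, read off production rules describing how inserting the next largest element creates or preserves descents while respecting both forbidden patterns, and derive from them a recurrence whose solution matches $\binom{n}{2i+1}$. Small-case verification supports the count: for $m=4,\,i=2$ only $\tau=3142$ appears, matching $\binom{5}{5}=1$, and for $m=5,\,i=2$ exactly the six permutations $31425,\,41523,\,31452,\,41253,\,34152,\,14253$ appear, matching $\binom{6}{5}=6$. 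Once this step is complete, summing the three cases yields $D_n([1432];q)$ in closed form, and combined with the first paragraph completes the proof.
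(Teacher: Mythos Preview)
Your reduction is correct and elegant: writing $\si=1\tau$ and showing that $[\si]\in\Av_n([1432])$ if and only if $\tau\in\Av_{n-1}(321,2143)$, together with $\cdes[\si]=\des\tau+1$, cleanly converts the cyclic problem into a linear one. This is a genuinely different route from the paper's proof, which instead invokes Callan's structural description of $\Av_n[1432]$, tracks the position of $2$ relative to $1$, analyzes colored shuffles via a ``transition index'' statistic, and then solves the resulting recursion. Your approach has the virtue of naming exactly which linear class is in play; the paper's has the virtue of being self-contained once Callan's characterization is granted.

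That said, the proposal is not yet a proof. The essential step---that for $i\ge 2$ there are exactly $\binom{n}{2i+1}$ permutations in $\Av_{n-1}(321,2143)$ with $i$ descents---is only asserted, checked for $(n,i)\in\{(5,2),(6,2)\}$, and deferred to an unspecified generating-tree argument. This is precisely where all the content lies; the $i=0$ and $i=1$ cases are immediate. Moreover, a generating tree for $\Av(321,2143)$ labelled solely by $\des$ will not obey finite production rules: inserting the new maximum interacts with the \emph{position} of the rightmost inversion (for $321$) and with whether an earlier inversion already exists (for $2143$), so at least one additional label is needed, and it is not at all clear that the resulting rules collapse to the claimed binomial coefficient. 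Until you actually carry out this enumeration---via a bijection with $(2i+1)$-subsets of $[n]$, an explicit structural description of the $i$-descent permutations in $\Av(321,2143)$ for $i\ge2$, or a fully worked recursion---the argument remains incomplete.
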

\begin{proof}
We use Callan's description of the avoidance for $[1234]$ translated by complementation to apply to $[1432]$.  We are going to derive a recursion for $D_n([1432];q)$.  If $[\si]\in\fS_n[1432]$  then suppose $\si_n=1$ and $\si_k=2$ for some $1\le k\le n-1$.  There are three cases.

If $k=1$ then there is a bijection between such $[\si]$ and $\Av_{n-1}[1432]$ obtained by removing $1$ and taking the order isomorphic cyclic permutation on $[n-1]$.  Since $2$ immediately follows $1$ cyclically in $[\si]$, the descent into $1$ remains a descent after applying the map.  So the contribution of this case is $D_{n-1}([1432];q)$.

Now suppose that $2\le k\le n-1$ and write
$$
\si=\rho 2\tau 1.
$$
where $|\rho|=k-1$, $|\tau|=n-k-1$.  As Callan proves, $\rho$ must be increasing.  So there are two more cases depending upon whether the elements of $\rho$ are consecutive or not.  Suppose first that they are not consecutive.  In this case, $\tau$ must also be increasing so $\cdes[\si]=2$.  To compute the number of such $\si$, note that once the elements of $\rho$ have been picked from $[3,n]$, all of $\si$ is determined. The total number of nonempty subsets of this interval is $2^{n-2}-1$.  And those which consist of consecutive integers are determined by their minimum and maximum element, which could be equal.  So there are $\binom{n-1}{2}$ subsets to exclude.  The contribuion of this case is then
$$
\left(2^{n-2}-\binom{n-1}{2}-1\right)q^2.
$$

Finally we consider the case when $\rho\neq\emp$ is consecutive (and still increasing), say with minimum $m+1$ and maximum $M-1$.  
Note that if $l=|\tau|$ then $0\le l\le n-3$.
Callan shows that the possible $\tau$ are the elements of 
$(34\ldots m)\shu(M,M+1,\ldots,n)$.  Since a permutation can be written as a shuffle in many ways, the same shuffle could occur for different $\rho$.  So it will be convenient to color the elements of the second sequence by marking them with a hat.  Thus the $\si$ in this case are in bijection with colored shuffles 
$(34\ldots m)\shu(\widehat{M},\widehat{M+1},\ldots,\widehat{n})$.  It will also be convenient to consider these as corresponding to the sequences $2\tau$ by prepending a $2$ to each shuffle and considering $2$ as an uncolored element.
Set $S$ be the set of such sequences $s=2s_2 s_3\ldots s_{l+1}$ 
where $l,m,M$ are allowed to vary over all possible values.  Note that if $s$ corresponds to $\si$ then $\des\si=2+\des s$.  To compute $\des s$, we consider the transition indices
$$
\Tr s = \{i \mid \text{$s_i$ is colored and $s_{i+1}$ is not, or vice-versa}\}.
$$
For example, if $s=23\widehat{6}45\widehat{7}\widehat{8}$ then $\Tr s=\{2,3,5\}$.  It is easy to see that the map $\Tr:S\ra 2^{[l]}$, the range being all subsets of $[l]$, is a bijection.  Also, every other transition index of $s$ starting with the second corresponds to a descent.  So, using the round down function, $\des s = \fl{\#\Tr s/2}$.  We can now complete this case  using $i=\#\Tr s$ to see that the contribution  is
\begin{align*}
\sum_{l=0}^{n-3}\sum_{i=0}^l \binom{l}{i} q^{\fl{i/2}+2}
&=\sum_{i=0}^{n-3} q^{\fl{i/2}+2} \sum_{l=i}^{n-3} \binom{l}{i}\\
&=\sum_{i=0}^{n-3} \binom{n-2}{i+1} q^{\fl{i/2}+2}\\
&=q^2 \sum_{j\ge0}\left[\binom{n-2}{2j+1}+\binom{n-2}{2j+2}\right]q^j\\
&=q^2 \sum_{j\ge0}\binom{n-1}{2j+2}q^j.
\end{align*}

Putting all the cases together we have
$$
D_n([1432];q) = D_{n-1}([1432];q) + q^2 
\left[2^{n-2}-\binom{n-1}{2}-1 +\sum_{j\ge0}\binom{n-1}{2j+2}q^j\right].
$$
As usual, the routine verification that our desired formula satisfies this recursion and the initial condition is left to the reader.
\end{proof}

We now turn to the cyclic descent polynomials for pairs in $[\fS_4]$.  To simplify notation, for any polynomial $f(q)$ and  $n\in\bbN$ we let 
$$
f^{(n)}(q)=q^n f(1/q).
$$
\begin{thm}
\label{DnDob}
We have the following descent polynomials.
\begin{enumerate}
\item[(a)]  We have
$$
D_n([1234],[1243]) = D_n([1342],[1432]) = 
D_n^{(n)}([1243],[1432]) = D_n^{(n)}([1234],[1342]).
$$
And for $n\ge3$
$$
D_n([1234],[1342];q) = (2n-5)q^{n-2} + q^{n-1}.
$$
\item[(b)] We have
$$
D_n([1423],[1432])=D_n^{(n)}([1234],[1324]).
$$
And for $n\ge3$
$$
D_n([1234],[1324];q) = (2n-5)q^{n-2} + q^{n-1}.
$$
\item[(c)] We have
$$
D_n([1324],[1432]) = D_n^{(n)}([1234],[1423]).
$$
And for $n\ge1$
$$
D_n([1234],[1423];q)=q^{n-1}+\binom{n-1}{2}q^{n-2}.
$$
\item[(d)]  We have
$$
D_n([1243],[1423])=D_n([1342],[1423])
=D_n^{(n)}([1243],[1324]) = D_n^{(n)}([1324],[1342]).
$$
And for $n\ge1$
$$
D_n([1324],[1342];q)=q+\sum_{k=2}^{n-1} (n-k) q^k.
$$
\item[(e)] For $n\ge4$ we have
$$
D_n([1243],[1342];q)=q+q^2+q^{n-1}+q^{n-2}.
$$
\item[(f)] For $n\ge3$ we have
$$
D_n([1324],[1423];q)=q(1+q)^{n-2}.
$$
\end{enumerate}
\end{thm}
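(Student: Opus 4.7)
The plan is to reuse the explicit parameterization of $\Av_n([1324],[1423])$ from the proof of Theorem~\ref{1324,1423}. That proof shows every element of this class is built by starting with $n$, then successively adjoining $n-1, n-2, \ldots, 2$ to one of the two ends of the current arc, with $1$ then forced into the remaining gap of the cycle. This yields a bijection between $\Av_n([1324],[1423])$ and binary words $w = w_1 w_2 \cdots w_{n-2} \in \{L,R\}^{n-2}$, matching the count $2^{n-2}$.

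Given this bijection, I will express $\cdes[\si]$ directly in terms of $w$. The cycle $[\si]$ has $n$ adjacent pairs, which split naturally into two groups according to the construction: the $n-2$ edges introduced when $n-1, n-2, \ldots, 2$ are adjoined, and the $2$ edges incident to $1$ that appear when $1$ fills the final gap.

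For the first group, whenever $k$ is adjoined at the right end, the new edge runs from the previous right endpoint (which is larger than $k$) down to $k$, contributing a cyclic descent; adjoining at the left end creates an ascent instead. Thus this group contributes exactly $\#\{i : w_i = R\}$ descents. For the second group, both neighbors of $1$ in the final cycle exceed $1$, so the edge entering $1$ is a descent and the edge leaving $1$ is an ascent, giving a single descent regardless of which elements the neighbors are. Combining,
$$
\cdes[\si] \;=\; 1 + \#\{i : w_i = R\},
$$
and summing over all words yields
$$
D_n([1324],[1423];q) \;=\; \sum_{w \in \{L,R\}^{n-2}} q^{1+\#R(w)} \;=\; q(1+q)^{n-2}.
$$

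The main obstacle is organizational rather than substantive: one must pin down the bijection with $\{L,R\}^{n-2}$ carefully enough that the forced placement of $1$ in the last gap is treated uniformly, so that the contribution from the two edges incident to $1$ is not double-counted against any of the $w_i$-edges. A quick check at $n=3$, where the formula gives $q+q^2$ matching $[123]$ and $[132]$, and at $n=4$, where it gives $q(1+q)^2 = q + 2q^2 + q^3$ matching $[1234],[1243],[1342],[1432]$, confirms the bookkeeping before the general argument is executed.
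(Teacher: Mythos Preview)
Your argument for part (f) is correct. The bijection between $\Av_n([1324],[1423])$ and $\{L,R\}^{n-2}$ is exactly the structural description established in the proof of Theorem~\ref{1324,1423}, and your edge-by-edge accounting of cyclic descents---each $R$ creating one internal descent, each $L$ an internal ascent, and the two edges at $1$ contributing a single forced descent---is clean and accurate. The small-$n$ checks confirm the bookkeeping.

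Two remarks on comparison with the paper. First, the paper only writes out part (a) in detail and declares that (b)--(f) ``follow easily in a similar fashion from the descriptions of the avoidance classes in Section~\ref{pad}''; your write-up supplies exactly such a derivation for (f), so it is fully in the intended spirit. Second, the paper's model argument for (a) is recursive (it reads off a recurrence from the generating tree of Theorem~\ref{1234,1243}), whereas your argument for (f) is a direct bijective count. This difference is natural: the characterization of $\Av_n([1324],[1423])$ in Theorem~\ref{1324,1423} is already an explicit bijection with binary words, so a direct count is shorter than setting up a recursion; by contrast, the class in (a) is described only through its generating-tree structure, which makes the recursive route the obvious one there.

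One omission worth flagging: your proposal treats only (f) and says nothing about (a)--(e). The equalities among the various $D_n$ and $D_n^{(n)}$ in those parts all come from Lemma~\ref{rcgf}, and the explicit polynomials follow from the corresponding characterizations in Section~\ref{pad} (equations~\eqref{deep}, \eqref{deep2}, \eqref{1rt}, \eqref{132idi}, \eqref{sixy}) by the same kind of direct count you carried out for (f). If the proposal is meant to cover the full theorem, you should at least indicate this.
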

\begin{proof}
We will only prove (a) as the others follow easily in a similar fashion from the descriptions of the avoidance classes in Section~\ref{pad}.  We adopt the notation of the proof of Theorem~\ref{1234,1243}.  

We will use the description of the generating tree to obtain a recursion for $D_{n+1}[1243],[1432])$.  Note that
if $n+1$ is inserted in site $i$ of $\si$ to form $\si'$ then
$$
\cdes[\si']=\case{\cdes[\si]}{if $i$ is  a cyclic descent,}
{\cdes[\si]+1}{if $i$ is a cyclic ascent.}
$$
Since the site before $n$ is always active, these children will give a contribution of $qD_n([1243],[1432])$ because such a site is a cyclic ascent.  In $\de$ and $\ep$, insertion in the other active site gives permutations with $n-1$ descents.  So
$$
D_{n+1}[1243],[1432]) = 2 q^{n-1}+qD_n([1243],[1432]).
$$
It is now easy to check that the formula in (a) satisfies this recursion and is also valid at $n=3$, completing the proof.
\end{proof}

For classes avoiding $3$ or more patterns, we will only write down the results for those which are not eventually constant.  The interested reader can easily compute the polynomials for the remaining classes.  We also content ourselves with stating the polynomial for one member of every trivial Wilf equivalence class since the rest can be computed from Lemma~\ref{rcgf}.
\begin{thm}
We have the  descent polynomials
$$
D_n([1234],[1342],[1423];q) 
=D_n([1234],[1324],[1423];q)
=(n-2) q^{n-2} + q^{n-1}
$$
and
$$
D_n([1324],[1342],[1423];q) = q\cdot\frac{1-q^{n-1}}{1-q}
$$
for $n\ge2$.\hqed
\end{thm}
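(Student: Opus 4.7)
The plan is to split the statement into two independent parts. For the first equality, asserting that both $D_n([1234],[1342],[1423])$ and $D_n([1234],[1324],[1423])$ equal $(n-2)q^{n-2}+q^{n-1}$, I would reuse the generating trees built in the proofs of Theorems~\ref{1234,1243,1423} and~\ref{1234,1324,1423}, which both satisfy the production rules $(*)\ra(1)(2)$, $(1)\ra(1)$, $(2)\ra(1)(2)$. As in the proof of Theorem~\ref{DnDob}(a), inserting $n+1$ at a cyclic ascent of $[\si]$ raises $\cdes$ by one, while insertion at a cyclic descent preserves it; so I only need to track the ascent/descent character of each active site.

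The unique degree-$2$ node at level $n$ is the decreasing permutation $\de_n = n,n-1,\ldots,2,1$, which has $\cdes = n-1$. Its two active sites are the site just before $n$ in cyclic order (a cyclic ascent, since $n$ is the maximum) and the site between $2$ and $1$ (a cyclic descent); inserting $n+1$ in these produces $\de_{n+1}$ with $\cdes = n$ and a new degree-$1$ node with $\cdes = n-1$ respectively. Every degree-$1$ parent at level $n$, meanwhile, has its sole active site at the site before $n$, which is again a cyclic ascent, so it contributes $q$ times its own term in $D_n$. Aggregating, I get the recursion
$$
D_{n+1} = q\, D_n + q^{n-1}
$$
with base case $D_2 = q$, and an easy induction then yields the claimed closed form.

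For the remaining identity, $D_n([1324],[1342],[1423]) = q(1-q^{n-1})/(1-q)$, I would invoke the explicit characterization proved in Theorem~\ref{1243,1324,1423}: every $[\si] \in \Av_n([1324],[1342],[1423])$ equals $[21\spn{\de_k,\io_{n-k}}]$ for a unique $k \in \{1,\ldots,n-1\}$, that is, $\si = n,n-1,\ldots,n-k+1,1,2,\ldots,n-k$. Direct inspection shows $\cdes[\si] = k$: the decreasing block contributes $k-1$ descents, the junction from $n-k+1$ down to $1$ contributes one more, and the increasing block together with the cyclic wrap from $n-k$ back up to $n$ contributes only ascents. Summing $q^k$ over $k = 1,\ldots,n-1$ gives the desired generating function.

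The main obstacle is bookkeeping: correctly identifying the ascent/descent character of each active site at every node type in the two generating trees, and in particular confirming that the degree-$1$ children produced from $\de_n$ and from other degree-$1$ parents all reach $\cdes = n-1$ at the next level. Once that is settled, the recursion and its closed form follow by routine induction, and the third polynomial drops out directly from the structural classification already established in Section~\ref{tmp}.
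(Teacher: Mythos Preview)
Your approach is correct and matches the paper's intended method (the paper omits the proof, but your use of the generating trees from Theorems~\ref{1234,1243,1423} and~\ref{1234,1324,1423} together with the ascent/descent bookkeeping exactly parallels the argument given for Theorem~\ref{DnDob}(a), and your treatment of the third polynomial via the explicit description in Theorem~\ref{1243,1324,1423} is the obvious route).

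One small inaccuracy worth fixing: your description of the second active site of $[\de_n]$ as ``the site between $2$ and $1$'' is only literally correct for $T([1234],[1342],[1423])$. In $T([1234],[1324],[1423])$ the two active sites of $\de$ (in the representative $1,n,n-1,\ldots,2$ of~\eqref{deep2}) are site~$1$ (between $1$ and $n$) and site~$2$ (between $n$ and $n-1$); the latter, not the site between $2$ and $1$, is the relevant descent site there. Fortunately both ``other'' sites are cyclic descents, so your recursion $D_{n+1}=qD_n+q^{n-1}$ holds for both trees and the conclusion stands.
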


\section{Open problems and concluding remarks}
\label{opc}

We collect here various areas for future research in the hopes that the reader will be interested in pursuing this work.

\subsection{Longer patterns}

There has been very little work about containment and avoidance  for cyclic patterns of length longer than $4$.  Of course, the cyclic Erd\H{o}s-Szekeres Theorem, Theorem~\ref{EScyc} above, is one such result.  There is also a paper of Gray, Lanning and Wang~\cite{GLW:pcc1} where the authors consider cyclic packing (maximizing the number of copies of a given pattern among all the permutations $[\si]\in[\fS_n]$ for some $n$) and superpatterns (permutations containing all the patterns $[\pi]\in[\fS_k]$ for some $k$).
It would be interesting to see if there are nice enumerative formulas for  classes consisting of cyclic patterns of length $5$ and up.

\subsection{Other statistics}

One could study other cyclic statistics.  For example,
the {\em peak set} of a linear permutation is
$$
\Pk\pi = \{i \mid \pi_{i-1}<\pi_i>\pi_{i+1}\}
$$
with corresponding {\em peak number}
$$
\pk\pi=\#\Pk\pi.
$$
Peaks are an important part of Stembridge's theory of enriched $P$-partitions~\cite{ste:epp} where $P$ is a partially ordered set.  On the enumerative side, the study of permutations which have a given peak set has been a subject of current 
interest~\cite{BBPS:mew,BBS:pgp,BFT:crp,CDOPZ:nps,DHIO:ppp1,DHIO:ppp2,DHIP:psc}.  Now define the {\em cyclic peak number} to be
$$
\cpk[\pi]= \#\{i \mid \text{$\pi_{i-1}<\pi_i>\pi_{i+1}$ where subscripts are taken modulo $n$}\}.
$$
As with $\cdes$, this is well defined since it is independent of the choice of representative of $[\pi]$.  There should be interesting generating functions for the distribution of $\cpk$ over avoidance classes, or even for the joint distribution of $\cdes$ and $\cpk$.  As evidence, we prove one such result.
\begin{thm}
For $n\ge3$
$$
\sum_{[\si]\in\Av_n([1234],[1342])}q^{\cdes[\si]}t^{\cpk[\si]}
= q^{n-2}t+(2n-6)q^{n-2}t^2 + q^{n-1}t.
$$
\end{thm}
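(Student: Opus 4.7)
The plan is to induct on $n$ using the generating tree $T([1234],[1342])$ built in the proof of Theorem~\ref{1234,1243}. That proof shows that at level $n\ge 3$ the tree has exactly two degree-$2$ nodes, namely the permutations $\de$ and $\ep$ of \eqref{deep}, along with $2n-6$ degree-$1$ nodes, with productions $(1)\to(1)$ and $(2)\to(1)(2)$ in which the degree-$2$ child of $\de$ (respectively $\ep$) is again $\de$ (respectively $\ep$).

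The main tool is a pair of update rules describing how $\cdes$ and $\cpk$ transform under insertion of the new maximum. If $[\si']\in[\fS_{n+1}]$ is obtained from $[\si]\in[\fS_n]$ by inserting $n+1$ at site $i$ (between $\si_i$ and $\si_{i+1}$ read cyclically), I claim
\begin{align*}
\cdes[\si']&=\cdes[\si]+[\si_i<\si_{i+1}],\\
\cpk[\si']&=\cpk[\si]+1-[\si_i\text{ is a peak}]-[\si_{i+1}\text{ is a peak}],
\end{align*}
where the Iverson-bracket indicators refer to $[\si]$. The first formula comes from the fact that the relation at site $i$ is replaced in $[\si']$ by an ascent-descent pair. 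For the second, the $+1$ records the new peak at $n+1$, while $\si_i$ and $\si_{i+1}$ can no longer be peaks in $[\si']$ since each is now adjacent to the larger element $n+1$.

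With these rules in hand, a direct computation for $\de=n,n-1,\ldots,1$ gives $(\cdes,\cpk)[\de]=(n-1,1)$ (the unique peak sits at $n$), producing the term $q^{n-1}t$, and similarly $(\cdes,\cpk)[\ep]=(n-2,1)$, producing the term $q^{n-2}t$. The remaining task is to show that every degree-$1$ permutation at level $n\ge 4$ satisfies $(\cdes,\cpk)=(n-2,2)$, accounting for $(2n-6)q^{n-2}t^2$. This is proved inductively. For a degree-$1$ node $[\si]$ at level $n$, the proof of Theorem~\ref{1234,1243} tells us that the unique active site is the one just before $n$, so $\si_{i+1}=n$; at this site the insertion is at an ascent ($\si_i<n$), $\si_{i+1}=n$ is a peak of $[\si]$, and $\si_i$ (being followed by $n$) is not a peak. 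The update rules then force $(\cdes,\cpk)[\si']=(n-1,2)=((n+1)-2,2)$, giving the desired values at level $n+1$. The two extra degree-$1$ children arising from $\de$ and $\ep$ (obtained by insertion at site $n-1$ in each) are checked directly from \eqref{deep} to also have $(\cdes,\cpk)=(n-1,2)$, supplying the remaining two degree-$1$ contributions needed at level $n+1$.

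The main (mild) obstacle is establishing the peak update rule cleanly, in particular verifying that inserting the new maximum between $\si_i$ and $\si_{i+1}$ can only destroy, never create, peak statuses at those two positions. Once this is in place the induction is routine.
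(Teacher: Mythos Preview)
Your proof is correct and follows essentially the same approach as the paper: both use the generating tree $T([1234],[1342])$ from Theorem~\ref{1234,1243} and track how $\cdes$ and $\cpk$ change when $n+1$ is inserted, with the key observation that inserting just before $n$ raises $\cdes$ by one and leaves $\cpk$ fixed. The only cosmetic difference is packaging: you give explicit update rules and directly characterize $(\cdes,\cpk)$ on every node by induction, whereas the paper compresses the same information into the recursion $F_{n+1}(q,t)=2q^{n-1}t^2+qF_n(q,t)$ and then checks the closed form.
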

\begin{proof}
Let $F_n(q,t)$ denote the desired generating function.  We proceed as in the proof of Theorem~\ref{DnDob} (a) to find a recursion for $F_{n+1}(q,t)$.  Since the largest element of $[\si]$ is always a cyclic peak, inserting $n+1$ before $n$ does not change $\cpk$.  So this contributes $q F_n(q,t)$ to the recursion.  For $\de$ and $\ep$, inserting $n+1$ in the other active site increases the number of peaks to $2$.  So the contribution from these cases is $2 q^{n-1} t^2$.  In summary
$$
F_{n+1}(q,t) = 2 q^{n-1} t^2 + q F_n(q,t)
$$
and the desired polynomial is easily seen to be the solution.
\end{proof}

In a recent paper Adin, Gessel, Reiner, and Roichman~\cite{AGRR:cqf} defined a cyclic analogue of the Hopf algebra of quasisymmetric functions.  In this context the cyclic descent set of a linear permutation arises naturally in the description of the product in this algebra.  They also raise the following intriguing question.
\begin{question}
Find an analogue of the major index for cyclic permutations that has nice properties, such as a generating function over $[\fS_n]$ which factors nicely as does the generating function for the ordinary major index over $\fS_n$. 
\end{question}

\subsection{Vincular patterns}

The study of vincular patterns was originated by Babson and Steingr\'{\i}msson~\cite{BS:gpp} and has since become a mainstay of the pattern field.  We consider $\pi$ as a {\em vincular pattern} if one only counts occurrences in $\si$ where certain adjacent elements of $\pi$ must also be adjacent in the copy in $\si$.  Such adjacent elements are overlined in $\pi$.  For example, $\si=24513$
contains two copies of $\pi=132$, namely $243$ and $253$.  But only $243$ is a copy of $\ol{13}2$.  Avoidance and Wilf equivalence are defined in the obvious way.  These notions and the corresponding notation carry over to cyclic patterns without change.  There are undoubtedly nice results which can be proven about vincular cyclic patterns.  As an example, we show how one vincular class is enumerated by the Catalan numbers.
\begin{thm}
We have
$$
[13\ol{24}]\equiv [1\ol{42}3]\equiv[\ol{13}24]\equiv[2\ol{31}4]. 
$$
And for $n\ge1$
$$
\#\Av_n[13\ol{24}]=C_{n-1}.
$$
\end{thm}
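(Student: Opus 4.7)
The four displayed cyclic Wilf equivalences are all trivial, being realized by the reversal/complement action of $D_4$ on cyclic vincular patterns: $r$, $c$, and $rc$ applied to $[13\ol{24}]$ preserve the marked cyclic edge $\{2,4\}$ in the underlying cyclic pattern $[1324]$, and after rotating to display the mark internally one reads off $[1\ol{42}3]$, $[2\ol{31}4]$, and $[\ol{13}24]$ respectively. So it suffices to enumerate $\Av_n[13\ol{24}]$. My plan is to construct the bijection
$$
\phi\colon \Av_n[13\ol{24}]\longrightarrow \Av_{n-1}(213)
$$
that sends $[\si]$ to (the standardization of) $\tau:=\si_2\si_3\cdots\si_n$, where $\si=1\,\si_2\cdots\si_n$ is the unique representative of $[\si]$ with $\si_1=1$. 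Granting that $\phi$ is well-defined and bijective, the classical Catalan identity $\#\Av_{n-1}(213)=C_{n-1}$ cited in the introduction immediately yields $\#\Av_n[13\ol{24}]=C_{n-1}$.

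The underlying map $[\fS_n]\to\fS_{n-1}$ is manifestly a bijection (the inverse prepends $1$ to a copy of $\tau$ with each entry shifted up by one), so the content reduces to the biconditional: $[\si]$ avoids $[13\ol{24}]$ iff $\tau$ avoids $213$. A convenient tool is the easy lemma that a linear $\tau$ contains $213$ iff it contains the vincular $2\ol{13}$, proved by taking a $213$-copy at positions $a<b<c$ and letting $q\in[b,c-1]$ be the largest index with $\tau_q<\tau_a$ to produce an adjacent copy at $(a,q,q+1)$. One direction of the biconditional is then immediate: if $\tau$ has a $2\ol{13}$ at $\tau$-positions $(p,q,q+1)$, the tuple $(a,b,c,d):=(1,\si_{p+1},\si_{q+1},\si_{q+2})$ witnesses a cyclic copy of $[13\ol{24}]$ in $[\si]$, since $a<c<b<d$ occur in cyclic order $a,b,c,d$ and $c,d$ are linearly (hence cyclically) adjacent.

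For the reverse direction, suppose $[\si]$ contains a cyclic copy $a,b,c,d$ with $a<c<b<d$ and $c,d$ cyclically adjacent at $\si$-positions $(j,j+1)$. The wrap-around pair $(\si_n,\si_1)=(\si_n,1)$ is always a descent, and the inequality $c\ge 2$ rules out $j=1$, so $j\in[2,n-1]$ and $c,d$ sit in $\tau$ at the adjacent positions $j-1,j$. A case analysis on where $a$ and $b$ lie in the cyclic arc from $\si_{j+1}$ back to $\si_j$ --- namely whether each of them belongs to $\{\si_2,\dots,\si_{j-1}\}$, equals $\si_1=1$, or belongs to $\{\si_{j+2},\dots,\si_n\}$ --- shows that in $\tau$ the tuple $(a,b,c,d)$ (or just $(b,c,d)$ when $a=1$) realizes one of the length-$4$ patterns $1324$, $2413$, $3241$, or the length-$3$ pattern $213$ directly; each of the length-$4$ patterns contains $213$, so $\tau$ contains $213$. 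The main obstacle in the plan is exactly this bookkeeping step: $\si_1=1$ splits the cyclic arc into two linear segments of $\tau$, so one must handle all placements of each witness and check the resulting sub-patterns.
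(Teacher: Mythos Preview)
Your argument is correct and complete: the bijection $[\si]\mapsto\tau$ is well defined, and the biconditional ``$[\si]$ avoids $[13\ol{24}]$ iff $\tau$ avoids $213$'' goes through exactly as you outline. The $2\ol{13}$ lemma handles one direction cleanly, and the case analysis on the cyclic placement of $a$ and $b$ relative to position~$1$ in the other direction does yield the patterns $1324$, $3241$, $2413$, or $213$ in $\tau$, each of which contains $213$.

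Your route, however, is genuinely different from the paper's. The paper does not produce a bijection with a known Catalan class; instead it writes $\si$ with $\si_n=n$ and $\si_{n-1}=m$, shows that avoidance forces a block decomposition $\si=\rho\,\tau\,m\,n$ with $\rho$ a permutation of $[m+1,n-1]$ and $\tau$ a permutation of $[m-1]$, checks that $[m\rho]$ and $[\tau m]$ each avoid $[13\ol{24}]$, proves a converse, and concludes via the Catalan convolution recurrence
\[
\#\Av_n[13\ol{24}]=\sum_{m=1}^{n-1}\#\Av_m[13\ol{24}]\cdot\#\Av_{n-m}[13\ol{24}].
\]
Your approach is more direct and arguably more transparent: it identifies the avoidance class explicitly with $\Av_{n-1}(213)$ via an elementary bijection, so the Catalan count is inherited rather than re-derived. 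The paper's approach, on the other hand, exposes the recursive (binary-tree-like) structure of the class and does not need the auxiliary vincular lemma or the four-case bookkeeping around position~$1$; the tradeoff is that the converse in the paper's decomposition requires its own nontrivial case analysis on where a hypothetical copy meets the blocks $\rho$, $\tau$, $m$, $n$.
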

\begin{proof}
The Wilf equivalences are trivial. To prove the Catalan formula, suppose that $[\si]\in\Av_n[13\ol{24}]$ for $n\ge2$ and write
$\si$ so that $\si_n=n$ and $\si_{n-1}=m$ for some $m\in[n-1]$.  First notice that $\si=\rho\tau m n$ where $\rho$ and $\tau$ are permutations of $[m+1,n-1]$ and $[m-1]$, respectively.  For if there are $x<m<y<n$ with $x$ before $y$ in $\si$ then $[xymn]$ is a copy of $[13\ol{24}]$. Furthermore, it is clear that $[m\rho]$ and $[\tau m]$ must avoid the forbidden pattern.

We claim the if $\si=\rho\tau m n$ where $\rho$ and $\tau$ obey the restrictions of the previous paragraph then $[\si]$ avoids $[13\ol{24}]$.  Suppose, towards a contradiciton, that a  copy $[\ka]=[wyxz]$ 
exists with $wyxz$ order isomorphic to $13\ol{24}$.
Consider the elements $x$ and $z$ which play the roles of $2$ and $4$.  The possibility that they are $m$ and $n$, respectively, is ruled out by the fact that every element of $\rho$ is larger than every element of $\tau$.  If  $z\in\tau m$ then all of $\ka$ must be in this subsequence since $z$ is the largest element of the copy.  But this is impossible since $[\tau m]$ avoids the bad pattern.  Finally, suppose  $z\in\rho$. This forces  $x\in\rho$ since it is comes cyclically just before $z$, and $n$ is too large to be $x$.  We must also have $y\in\rho$ since $x<y<z$.  But now there is no possible choice for $w$.  Indeed, if $w\in [m\rho]$ then  $[\ka]$ is in this subsequence, contradicting our assumption.  And if $w\in\tau$ then it could be replaced by $m$ since $x,y,z>m$, yielding the same contradiction as before.

From the first two paragraphs we immediately get the recursion
$$
\#\Av_n[13\ol{24}]=\sum_{m=1}^{n-1} 
\#\Av_m[13\ol{24}]\cdot \#\Av_{n-m}[13\ol{24}].
$$
From this the Catalan enumeration follows by induction.
\end{proof}

It appears that sometimes rather than trying to find the size of the avoidance class directly, it may be easier to use exponential generating functions.  Given a set of (possibly vincular) patterns $[\Pi]$, let
$$
E[\Pi]=\sum_{n\ge 0} \#\Av_n[\Pi] \frac{x^n}{n!}.
$$
We have the following conjectures for two vincular avoidance classes.  Once the corresponding differential equation is proved, an explicit solution can easily found using separation of variables.
\begin{conj}
We have the following.
\begin{enumerate}
    \item If $E=E[\ol{123}]$ then
    $$
    E' = E^2 - E + 1.
    $$
    \item If $E=E[\ol{213}]$ then
    $$
    E' = e^{E-\frac{x^2}{2}}.
    $$
\end{enumerate}
\end{conj}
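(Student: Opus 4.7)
The plan is to prove each conjectural differential equation by exhibiting a combinatorial decomposition of the cyclic avoidance class that translates term-by-term into the stated EGF identity. In both cases the LHS $E'(x)$ is the EGF of $\#\Av_{n+1}[\Pi]$, which I view as counting a cyclic avoider of length $n+1$ with a distinguished point, so the task reduces to identifying what combinatorial object this shifted class corresponds to after cutting the cycle at that point.

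For part~(1), I would place $\si_1=1$ in the canonical rotation. Because $1$ is the minimum, the cyclic triples wrapping around it, namely $(\si_n,\si_{n+1},1)$ and $(\si_{n+1},1,\si_2)$, can never be order-isomorphic to $123$, while $(1,\si_2,\si_3)$ reduces to the single boundary condition $\si_2>\si_3$. This yields a bijection between $\Av_{n+1}[\ol{123}]$ and those linear $\ol{123}$-avoiders of length $n$ that begin with a descent. I would then decompose such a constrained linear permutation by the position of its maximum, splitting it into two independently $\ol{123}$-avoiding subpermutations whose joint EGF contributes the $E^2$ term; the additive correction $-E+1$ should arise from degenerate cases in which one piece is empty, which is exactly where the forced descent $\si_2>\si_3$ interferes with the free product structure.

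For part~(2), the form $e^{E-x^2/2}$ strongly suggests an exponential-formula decomposition. I would mark a cell in $[\si]\in\Av_{n+1}[\ol{213}]$, remove it, and argue that the remaining positions partition into a set of independent blocks, each itself a cyclic $[\ol{213}]$-avoider, so that their disjoint-union EGF yields $\exp(E)$. The subtraction of $x^2/2$ inside the exponent should compensate a single exceptional contribution at the size-$2$ level: presumably the unique cyclic permutation $[12]$ is either overcounted as an isolated block or induces a spurious boundary $\ol{213}$-pattern across the deletion site that must be subtracted.

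The main obstacle in each part will be pinning down the combinatorial content of the correction terms $-E+1$ and $-x^2/2$: the main terms $E^2$ and $e^E$ are the familiar outputs of pointed and set-partition decompositions, but the corrections encode the subtle way the vincular-adjacency condition interacts with the cyclic wrap-around at the distinguished element. My approach would be to first verify the identities numerically for $n\le 4$ to narrow down the precise form of the corrections, then transcribe that local data into a global bijection, most likely via a generating-tree argument in the style of Sections~\ref{pad} and~\ref{tmp}, and finally invoke separation of variables to recover the explicit closed-form solutions.
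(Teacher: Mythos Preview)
The statement you are addressing is a \emph{conjecture} in the paper, placed in the final section among open problems; the authors explicitly do not prove it, so there is no proof in the paper to compare your attempt against.

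What you have written is a plan, not a proof, and you say so yourself (``the main obstacle\ldots will be pinning down the combinatorial content of the correction terms''). Your opening move for part~(1) is sound: writing $\si_1=1$, the two cyclic triples straddling position~$1$ are automatically non-increasing, and the triple $(1,\si_2,\si_3)$ forces $\si_2>\si_3$, so $\Av_{n+1}[\ol{123}]$ is in bijection with linear $\ol{123}$-avoiders of length $n$ that begin with a descent. But the next step does not work as stated. If you split such a $\tau$ at its maximum $m$, the triple $(\tau_{k-2},\tau_{k-1},m)$ imposes a descent condition at the \emph{end} of the left block as well, so the two blocks are not freely chosen $\ol{123}$-avoiders and the product is not simply $E^2$; you have not shown how these extra boundary constraints conspire to give exactly $-E+1$. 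For part~(2) the gap is wider: ``argue that the remaining positions partition into a set of independent blocks, each itself a cyclic $[\ol{213}]$-avoider'' is an assertion with no mechanism behind it, and the $-x^2/2$ correction is pure guesswork.

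One further caution if you pursue this: with the paper's stated normalization $E(0)=\#\Av_0=1$, the ODE $E'=E^2-E+1$ produces the sequence $1,1,1,3,9,\ldots$, whereas direct enumeration gives $\#\Av_n[\ol{123}]=1,1,1,1,3,9,\ldots$ for $n=0,1,2,3,4,5$. The ODE values appear to match $\#\Av_{n+1}$ rather than $\#\Av_n$, so before attempting a bijective proof you should first resolve whether the intended generating function is shifted by one from what is written.
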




\bibliographystyle{alpha}

\newcommand{\etalchar}[1]{$^{#1}$}

\end{document}